\documentclass[11pt,final]{article}
\usepackage{authbl
k}
\usepackage[letterpaper,margin=1in]{geometry}
\usepackage[utf8]{inputenc}
\usepackage[english]{babel}

\usepackage{epic,eepic}
\usepackage{mathtools}
\usepackage{braket}
\usepackage{amssymb,amsthm,amscd,latexsym,mathrsfs,units,enumerate,bm,bbm,cancel,physics,amsfonts,tikz, phonetic, multicol, float, caption, subcaption, comment}
\usepackage{color,xcolor}
\usepackage{graphicx,wrapfig}
\usepackage[font=footnotesize]{caption}

\usepackage[colorlinks]{hyperref}
\usepackage[notref, notcite]{showkeys}

\newcommand{\be}{\begin{equation}}
\newcommand{\eq}[2]{\begin{equation}\begin{split}#1\end{split}#2\end{equation}}
\newcommand{\ee}{\end{equation}}

\newcommand{\mat}[1]{\begin{pmatrix}#1\end{pmatrix}}
\newcommand{\imply}[1]{\Rightarrow}
\newcommand{\eps}{\varepsilon}

\newcommand{\umax}{\xi}

\newcommand{\cmt}[1]{}

\newtheorem{theorem}{Theorem}[section]

\newtheorem{lemma}[theorem]{Lemma}
\newtheorem{proposition}{Proposition}

\newtheorem{crit}[theorem]{Criterion}

\theoremstyle{definition}

\newtheorem{remark}{Remark}

\providecommand{\keywords}[1]{\textbf{Keywords:} #1}

\author[1]{Blake~Barker\thanks{E-mail:blake@mathematics.byu.edu}}
\author[2]{Emmanuel Fleurantin\thanks{E-mail:efleuran@gmu.edu}}
\author[3]{Matt Holzer\thanks{E-mail:mholzer@gmu.edu}}
\author[4]{Christopher K.R.T. Jones\thanks{E-mail:ckrtj@renci.org}}
\author[5]{Sebastian Wieczorek\thanks{E-mail:sebastian.wieczorek@ucc.ie}}
\affil[1]{\small Department of Mathematics, Brigham Young University, Provo, UT 84604, USA}
\affil[2,3,4]{\small{Department of Mathematics, George Mason University, Fairfax, VA 22030}}
\affil[5] {\small {School of Mathematical Sciences, University College Cork, Cork, Ireland, T12 XF62}}
\date{}                                           

\title{Rate-Induced Tipping in a Non-Uniformly Moving Habitat and Determination of the Critical Rate}

\begin{document}
\maketitle

\vspace{-3em}

\begin{abstract}
A habitat that is moving due to environmental change may result in tipping to extinction if the rate at which it moves is too great. We use a scalar reaction-diffusion equation with a non-autonomous reaction term, representing a spatially localized habitat moving from one asymptotic location to another, as a context for studying this phenomenon. The movement is characterized by displacement $d$ and rate parameter $r$. 
The system admits three steady states in both asymptotic habitat locations: a stable extinction state $u_0^*=0$, an unstable pulse (so-called edge state) $u_1^*(x)>0$, which gives rise to the Allee effect, and a stable pulse (populated base state) $u_2^*(x)>u_1^*(x)$, which corresponds to a thriving population at its carrying capacity. Numerical simulations for a specific model identify a critical displacement $d^*$ and, for $d > d^*$, demonstrate the existence of a \textit{critical rate} $r_c(d)$ at which rate-induced tipping occurs: for $r> r_c$ an initially thriving population becomes extinct due to habitat movement being too rapid. We provide analytical results for two limiting cases. For $r\ll 1$, solutions track the moving base state with error $O(r)$. For $r\gg 1$, solutions converge to the extinction state provided $d$ is sufficiently large. For $d$ too small, no tipping occurs regardless of $r$. Numerical simulations complement and extend these analytical results. At the critical rate $r=r_c(d)$, we identify a pulse-to-pulse heteroclinic connection between the base state at the past asymptotic location and the  edge state at the future asymptotic location of the habitat. 
We also establish the uniqueness of this critical rate and non-degeneracy of the heteroclinic connection as $r$ varies. 

\end{abstract}

\begin{flushleft}
\keywords{Rate-induced tipping, reaction-diffusion equations, habitat shift, critical transitions, environmental change, species persistence}\\

\vspace{1em}

{\textbf{MSC Classification}: 35K57, 37N25, 34C60, 35B32, 92D25}\\
\end{flushleft}

\section{Introduction}

\begin{figure}[t]
 \begin{center}
\includegraphics[scale=0.4]{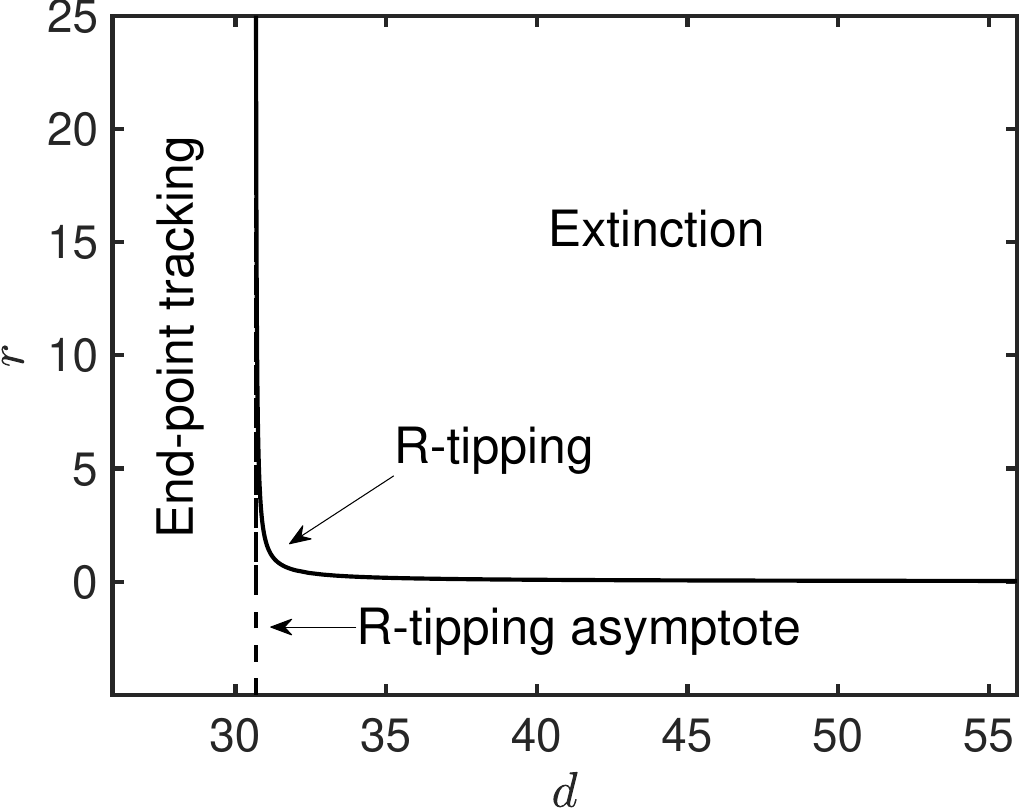}
\end{center}
\caption{Rate-induced tipping (R-tipping) diagram for the 
reaction-diffusion model~\eqref{eq:main} with $f$ defined as in~\eqref{eq:rterm} with parameters  $\beta = 0.15$, $\lambda = 4 \beta$ and $L = 25$, showing the relationship between habitat displacement $d$ and rate $r$ of habitat shift. 
The R-tipping curve separates parameter regions where the species persists (endpoint tracking) from the extinction region. For small displacements ($d  \lesssim 30$), no extinction occurs regardless of the rate, while larger displacements lead to extinction above some critical rate.
}
 \label{fig:criticalvalueplot}
\end{figure}

The movement of a habitat location could be forced by temperature or precipitation changes, such as occur in a warming climate. The habitat might move in a northerly direction (in the Northern Hemisphere) or up a mountain slope \cite{chen2011rapid,parmesan2003globally}. The issue we address here is how the rate at which such a habitat moves can result in the species going extinct, even though the habitat itself remains an environment conducive to the survival of the species in the new location.

This is naturally cast in terms of rate-induced tipping~\cite{ashwin2017parameter,wieczorek2023rate, ritchie2023rate}.  Tipping occurs in this case when external conditions change faster than some critical rate
and cause a transition away from a seemingly stable state, whereas a change in external conditions of the same magnitude, but at a slower rate, does not trigger such a transition. 
A major concern with climate change is that the underlying conditions are changing at a much faster rate than ever occurred in our planet's past \cite{diffenbaugh2013changes,loarie2009velocity}. This fact makes rate-induced tipping highly relevant to climate-driven systems. 

The scenario we address here is when a habitat shifts from one place to another. We imagine it to have been stably situated at a 
certain location and its transitioning to another location where it will stably sit under a new set of external conditions. The implications of a moving habitat have been addressed in previous work, starting with the work of Berestycki et al. for a mono-stable habitat, see \cite{berestycki2009can}. The case we consider is built on that formulated by Hasan et al. for a bi-stable habitat, see \cite{hasan2023rate}. In both of these works, the authors assume the habitat is moving steadily, which is represented by a linear ramp in the habitat position. Their work therefore does not take into account the acceleration and deceleration phases of the habitat shift.
While the steadily moving phase of the habitat shift is key, a full understanding of the dynamics caused by a shifting habitat will require accounting for variations in the speed of the shift. Our formulation thus involves a nonlinear ramp in the habitat position from one asymptotically static location to another.  

The casting of the habitat shift problem in this context introduces both new phenomena and new challenges. It is intuitively clear that a habitat will need to shift a certain distance to cause a problem for the survival of a species. We would expect that a species would survive a  location change that is small relative to the size of their habitat, however rapidly it might happen. We show that there is indeed a displacement threshold for rate-induced tipping to occur. This is illustrated in Figure 1, where the displacement of the habitat is represented by $d$, and the rate at which it moves by $r$. The displacement threshold line is just to the right of $d=30$.

With a steady shift, the problem can be cast as an ordinary differential equation (ODE), and the techniques of low-dimensional dynamical systems are invoked (\cite{berestycki2009can} and \cite{hasan2023rate}). Such a simplification is not available however, when the shift is time-varying. The problem has then to be analyzed as a full partial differential equation (PDE). In the work of \cite{ashwin2017parameter}, the characterization of rate-induced tipping in terms of a pullback attractor is first introduced, and we build on that approach here. We show there is a pullback attractor for the time-varying shift system. We derive the pullback attractor from a trajectory of a semiflow on an infinite-dimensional space.

Tipping in this ecological context means the population goes extinct due to its inability to keep up with the moving habitat. A key aspect of a rate-induced tipping analysis is the determination of a \textit{critical rate} that separates tracking, where the solution stays close to the moving base state, from tipping to the extinction state. In \cite{ashwin2017parameter}, it is shown that, at a critical rate, the pullback attractor asymptotes to an edge state in forward time, which is typically a saddle state with one unstable direction. We adopt a compactification framework~\cite{wieczorek2021} in which there is then a heteroclinic connection between the base state at the past asymptotic location of the habitat, from which the tipping occurs, to the edge  state at the future asymptotic location of the habitat, at the critical rate.  This pulse-to-pulse heteroclinic connection, which for us lies in an infinite-dimensional space, is not to be confused with the point-to-point heteroclinic connections that are the traveling pulses themselves~\cite{hasan2023rate}, and are trajectories for an ODE.

This paper is organized as follows. Section \ref{sec:pbsetting} establishes the mathematical framework, including the reaction-diffusion equation with moving habitat, the compactified system, key assumptions, and states our main Theorem: under fairly general assumptions, if the displacement $d$  is too small then rate-induced tipping does not occur, while we prove it does occur for large enough displacements. In Section \ref{sec:rigr}, a proof is given that solutions track the moving habitat at small rates ($r \ll 1$) ensuring survival, while large rates ($r \gg 1$) lead to extinction. This suggests a precise displacement threshold. While we do not prove that in general, the displacement threshold is seen to be unique for a specific model from \cite{hasan2023rate}. Moreover, the critical rate when the displacement is above that threhsold is also shown to be unique for this example. Section \ref{sec:comps} presents numerical methods for computing the critical rate $r_c(d)$ via pulse-to-pulse heteroclinic connections for the specific model. Section \ref{sec:conclusion} summarizes our findings and discusses implications for species survival under rapid environmental change.

\section{Mathematical framework}\label{sec:pbsetting}
\subsection{General set-up}

The species dynamics is assumed to be governed by a reaction-diffusion equation in one space dimension $x$. The habitat of the species  is denoted by $H(x)$, which is assumed to be non-negative.  The nonlinear reaction term $f(u, H(x))$ is dependent on $H(x)$, so that the dynamics are different inside the habitat, where the species can flourish, from outside, where it will not survive. We make the general assumptions that the function $f$ is $C^2$ on $\mathbb{R} \times \mathbb{R}$, and that $f(0,H)=0$ for all $H\in \mathbb{R}$. The function $H(x)$ itself is assumed to be $C^\infty$ and decaying exponentially as $x\to\pm\infty$.
The habitat $H(x)$ will move in time according to a function that centers it at a specific location. We denote this function $\gamma(t)$.

Denoting the species population density by $u$, the governing equation is
\begin{equation}\label{eq:main}
    u_t=u_{xx}+f\left(u,H(x-\gamma(t))\right).
\end{equation}
The diffusion term models migration, and the diffusion coefficient is set equal to $1$ for convenience. 
Note that the PDE~\eqref{eq:main} is nonautonomous in both $x$ and $t$. It can naturally be written in a moving frame with $z=x-\gamma(t)$. With independent variable $z$, then \eqref{eq:main} becomes
\begin{equation}\label{eq:main_mov}
     u_t=u_{zz}+\gamma_t(t)\, u_z+f\left(u,H(z)\right).
\end{equation}
We will actually specify the velocity of the shift function $\gamma(t)$ and introduce a rate parameter $r>0$ explicitly.
We take the shift to be a solution of an autonomous differential equation itself: $\gamma_t=r\,g(\gamma) $. Appending the equation for $\gamma$, we can then write the governing system as
\begin{equation}\label{main_mov}
\begin{split}
u_t=& u_{zz}+r\,g(\gamma)\,u_z+f\left(u,H(z)\right),\\
   \gamma_t=& r\,g(\gamma),
\end{split}
\end{equation}
which is non-autonomous in $z$, but no longer in $t$.
The fixed habitat positions are taken to be at $x=\pm a$, so that $d=2a$. To reflect the fact the habitat shifts from $-a$ (past asymptotic locsation) to $+a$ (future asymptotic location), we require that $g(\pm a)=0$ and $g(\gamma)>0$ if $\gamma \in (-a,+a)$. Unless explicitly stated otherwise, we will assume that  $\gamma(t)$ is parameterized so that $\gamma(0)=0$. 

We will work with a (semi-)dynamical system determined by \eqref{main_mov}, which we somewhat favor in this context as it is autonomous with regard to time. The augmentation of \eqref{eq:main_mov}, leading to \eqref{main_mov}, is akin to the standard addition of $t$ as a dependent variable that makes a nonautonomous system into an autonomous one.  But the use of $\gamma$ as opposed to $t$ as the extra variable is advantageous as $\gamma$ ranges over a compact domain. It is a special case of compactification for asymptotically autonomous systems~\cite{wieczorek2021}.
\begin{lemma}
    Equation \eqref{main_mov} generates a semiflow (dynamical system in $t\ge 0$) on $H^1\left(\mathbb{R}\right)\times[-a,+a]$.
\end{lemma}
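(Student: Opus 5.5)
Since the $\gamma$-equation decouples from the $u$-equation, we treat the two components in turn. The scalar ODE $\gamma_t=r\,g(\gamma)$ has a locally Lipschitz right-hand side (we assume $g$ is $C^1$). The endpoints $\pm a$ are equilibria, so by uniqueness the interval $[-a,+a]$ is invariant and every solution starting in it exists for all $t\in\mathbb{R}$. This gives a flow $\gamma(t)=\phi_t(\gamma_0)$ that depends continuously on $(t,\gamma_0)$. Substituting $\gamma(t)$ into the $u$-equation leaves a non-autonomous semilinear parabolic equation
\[
u_t=u_{zz}+b(t)\,u_z+f(u,H(z)),\qquad b(t)=r\,g(\gamma(t)).
\]
The coefficient $b$ is bounded, with $|b|\le r\max_{[-a,a]}|g|$, and continuous in $t$. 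The task therefore reduces to showing that this equation generates a continuous evolution process on $H^1(\mathbb{R})$ that depends continuously on $\gamma_0$. The semigroup property of the pair then follows from uniqueness together with the flow property of $\phi_t$.

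For local existence we would write the problem in mild form, using the heat semigroup $e^{t\partial_{zz}}$ on $H^1(\mathbb{R})$:
\[
u(t)=e^{t\partial_{zz}}u_0+\int_0^t e^{(t-s)\partial_{zz}}\big[b(s)u_z(s)+f(u(s),H)\big]\,ds.
\]
The drift term $b\,u_z$ lies only in $L^2$, but $\|e^{\tau\partial_{zz}}\|_{L^2\to H^1}\le C(1+\tau^{-1/2})$, which is integrable near $\tau=0$. For the reaction term we use $H^1(\mathbb{R})\hookrightarrow L^\infty$ and $f(0,H)=0$. Together with $f\in C^2$ these give $|f(u,H)|\le C(R)|u|$ and $|\partial_u f|\le C(R)$ on $\{|u|\le R\}$. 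It follows that $u\mapsto f(u,H(\cdot))$ is locally Lipschitz from $H^1$ to $H^1$, with $\partial_z f(u,H)=f_u u_z+f_H H'$ and $|f_H(u,H)|\le C(R)|u|$ because $f_H(0,H)=0$. A contraction argument in $C([0,T];H^1)$ then yields a unique local mild solution that depends continuously on $u_0$. Continuity in $\gamma_0$ comes from a Gronwall estimate on the difference of two mild solutions, with the coefficients $b$ and $\tilde b$ entering linearly. Standard parabolic regularity (Henry, Pazy) shows that mild solutions are classical for $t>0$.

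The main obstacle is global existence, which requires ruling out blow-up of $\|u(t)\|_{H^1}$ in finite time. Without further structure on $f$ this can fail; for example $f=u^3$ is compatible with everything assumed so far. We would therefore invoke the structural hypotheses on $f$ that the paper imposes. The natural one is that $f(u,H)<0$ for $u\ge M$ uniformly in $H$, which holds for the model of \cite{hasan2023rate}. Under such a hypothesis we would argue in two steps.

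\textbf{Step 1: an $L^\infty$ bound.} Constants $0$ and $M$ are sub- and supersolutions, so the comparison principle for the drift equation gives $0\le u\le\max(M,\|u_0\|_\infty)$ for nonnegative data. For general data we would use $-M$ as well, or a bound of the form $|f(u,H)|\le C(1+|u|)$ on the relevant range.

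\textbf{Step 2: $H^1$ control.} With $u$ bounded, $|f(u,H)|\le C|u|$ gives linear-in-$\|u\|_{L^2}$ growth. Testing the equation with $u$ and with $-u_{zz}$ and absorbing the drift via $|b\,u_z u_{zz}|\le \tfrac12 u_{zz}^2+\tfrac{b^2}{2}u_z^2$ produces a Gronwall inequality $\tfrac{d}{dt}\|u\|_{H^1}^2\le C\|u\|_{H^1}^2$. Hence the $H^1$ norm grows at most exponentially, the solution extends to all $t\ge0$, and continuity of the full map $(t,u_0,\gamma_0)\mapsto(u(t),\gamma(t))$ completes the construction of the semiflow.
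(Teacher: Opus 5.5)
Your proof is correct, modulo the extra hypothesis discussed below, and it differs from the paper's in two ways.

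\textbf{The linear part.} The paper keeps the drift in the linear operator. It notes that $\partial_{zz}$ generates a $C^0$ semigroup on $H^1(\mathbb{R})$, and that, since translation is continuous on $H^1$, so does $\partial_{zz}+rg(\gamma)\partial_z$. The only nonlinearity left is then $f(\cdot,H(z))$, which is locally Lipschitz on $H^1$ via $H^1\hookrightarrow L^\infty$. Pazy's Theorem~1.4 together with Gronwall estimates on the integral equation finishes the argument. You instead solve for $\gamma$ first, keep only the heat semigroup, and treat $b(t)u_z$ as a perturbation using the smoothing bound $\|e^{\tau\partial_{zz}}\|_{L^2\to H^1}\le C(1+\tau^{-1/2})$.

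Your version is explicit that the drift coefficient depends on time. Because $\gamma$ moves, the object the paper actually needs is the evolution family $e^{(t-s)\partial_{zz}}$ composed with translation by $\int_s^t b$, not a single semigroup. Your route also turns continuity in $\gamma_0$ into a linear perturbation estimate. The price is a singular Gronwall inequality, which the paper's route avoids because its nonlinearity loses no derivatives.

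\textbf{Global existence.} This is the more substantive difference: you address global existence and the paper's proof does not. Local mild solutions plus Gronwall give uniqueness and continuous dependence, not absence of blow-up. You are right that, under the assumptions in force where the lemma is stated ($f\in C^2$, $f(0,H)=0$), large data can blow up. The paper's argument therefore yields only a local semiflow. That is enough for the local invariant manifolds, but extending the unstable-manifold trajectory to all $t$ in Lemma~\ref{prop:pullback} does need global existence, and your argument supplies it.

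Be clear, though, that the condition $f(u,H)<0$ for $u\ge M$ is not among the paper's hypotheses, neither the general ones nor (H1)--(H5). You are adding it. That is legitimate and necessary for a global statement, and it holds for the model \eqref{eq:rterm}.

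Two smaller points on your Steps 1 and 2:
\begin{itemize}
\item In Step~1, tighten the treatment of sign-changing data. In the model, $H>0$ and $\lambda\ge 0$ give $f(u,H)>0$ for every $u<0$, so $-\max(M,\|u_0\|_\infty)$ is a subsolution. Your alternative, a linear bound on $|f|$ ``on the relevant range'', is circular, since that range is what you are trying to bound.
\item In Step~2, no absorption is needed: the drift terms vanish outright, because $\int u_z u=\int u_z u_{zz}=0$ on $\mathbb{R}$.
\end{itemize}
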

\begin{proof}
    The operator $\frac{d^2}{dz^2}$ generates a $C^0$ semigroup on $H^1(\mathbb{R})$, see for instance \cite{pazy1983semigroups}. Since translation is continuous on $H^1(\mathbb{R})$, $L=\frac{d^2}{dz^2}+rg(\gamma)\frac{d}{dz}$ also generates a $C^0$ semigroup. 

    Using the smoothness of $f(u,H)$ in its arguments and the fact that $H^1(\mathbb{R})$ embeds continuously into $L^\infty (\mathbb{R})$, it can be shown that $f(*,H(z))$ is locally Lipschitz on $H^1(\mathbb{R})$. From Theorem 1.4 of Pazy \cite{pazy1983semigroups}, there will be mild solutions on a (local) time interval. The full requirements for a semiflow then follow from applying the usual Gronwall estimates to the integral equation representation, see \cite{batesjones}.
\end{proof}

  \begin{remark}
      There are two different approaches to this semigroup formulation. The operators in question actually generate something stronger than a $C^0$ semigroup, namely an analytic semigroup. Henry \cite{henry1981geometric} uses this formulation, and it has the advantage of giving regularity of solutions as part of the theory. It would be based on the space $X=L^2(\mathbb{R})$, and $H^1(\mathbb{R})$ appears as the fractional power space $X^\frac12$. The semiflow properties are then shown to  hold on $X^\frac12=H^1(\mathbb{R})$, see \cite{henry1981geometric}. We favor the $C^0$ semigroup approach here as the invariant manifold theorems are much clearer in this context, see \cite{batesjones}.
  \end{remark}
 
 The limiting cases of $\gamma=\pm a$ are both invariant and carry the semiflow of the reaction-diffusion equation with static habitat:
\begin{equation}\label{main_steady}
u_t=u_{zz}+f\left(u,H(z)\right).
\end{equation}
We consider the Allee effect case in which \eqref{main_steady} is a bistable system with a stable pulse $u^*_2(z) > 0$, a stable trivial solution $u^*_0(z)=0$, and an intermediate unstable pulse $0 < u^*_1(z) < u^*_2(z)$. The unstable pulse $u^*_1(z)$ is often known as the threshold state, but in the R-tipping literature has come to be known as an edge state~\cite{wieczorek2023rate}, and we will use that terminology here. 
Precise hypotheses are given below in Section \ref{sec:hyp}. 

Using this framework, we can give clear definitions and criteria for rate-induced tipping. The compactification allows us to formulate the main criterion around the behavior of a trajectory that is asymptotic, in backward time, to the stable base state of the past asymptotic habitat's location. This trajectory is a (local) pullback attractor in the sense of \cite{kloeden2011nonautonomous}. The pullback attractor is a curve of functions of $z$, parameterized by $t$, in $H^1(\mathbf{R})$.
Note that the central role played by the compactified system \eqref{main_mov} comes out in the proof of this Lemma.

\begin{lemma}\label{prop:pullback}Assume the hypotheses (H1-5) of Section \ref{sec:hyp}. 
For each $r>0$, there is a (local) pullback attractor of the nonautonomous equation \eqref{eq:main} 
\begin{equation}\label{eq:pullback}
    \mathcal{U}=\{ U(t)\in H^1(\mathbf{R}): U(t)=u^*(z,t), -\infty<t<+\infty \},
\end{equation}
that tends to $u_2^*$ as $t\to-\infty$. In particular, $u^*(z,t)$ satisfies the first equation of \eqref{main_mov}. Equivalently, $u^*(x-\gamma(t),t)$ satisfies \eqref{eq:main}.
\end{lemma}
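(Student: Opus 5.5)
The idea is to work in the compactified autonomous system \eqref{main_mov} on $H^1(\mathbb{R})\times[-a,+a]$. There, the pullback attractor is obtained as a branch of the unstable manifold of the hyperbolic equilibrium $(u_2^*,-a)$. The invariant boundary $\{\gamma=-a\}$ carries the static semiflow \eqref{main_steady}, for which $u_2^*$ is assumed (by (H1--5)) to be a linearly stable pulse. The linearization of \eqref{main_mov} at $(u_2^*,-a)$ is block triangular:
\[
\begin{pmatrix} v\\ \eta\end{pmatrix}_t=\begin{pmatrix} \partial_{zz}+f_u(u_2^*,H(z)) & r g'(-a)\,\partial_z u_2^*\\ 0 & r g'(-a)\end{pmatrix}\begin{pmatrix} v\\ \eta\end{pmatrix}.
\]
Since $g(\gamma)>0$ on $(-a,a)$ and $g(-a)=0$, we have $g'(-a)\ge 0$. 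I would take $g'(-a)>0$ as part of the hypotheses; if it were zero one would need a center-manifold argument instead. The $\eta$-direction is then a simple positive eigenvalue $rg'(-a)$. The $v$-block has spectrum in $\{\mathrm{Re}\,\lambda\le -\delta<0\}$: the point spectrum by stability of $u_2^*$, and the essential spectrum by $f_u(0,H(\pm\infty))<0$, using the decay of $u_2^*$ and $H$. Hence $(u_2^*,-a)$ is a hyperbolic saddle with a one-dimensional unstable direction transverse to $\{\gamma=-a\}$.

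Next I would invoke the unstable manifold theorem for $C^0$ semiflows on Banach spaces (Bates--Jones \cite{batesjones}, as alluded to in the Remark). The nonlinearity is $C^1$ on $H^1$ because $f\in C^2$ and $H^1\hookrightarrow L^\infty$, exactly as in the semiflow lemma. This gives a one-dimensional local unstable manifold $W^u$ tangent to the eigenvector $(v_0,1)$, where $v_0=-(L_2-rg'(-a))^{-1} r g'(-a)\partial_z u_2^*$ and $L_2=\partial_{zz}+f_u(u_2^*,H(z))$. Only one branch enters $\gamma>-a$, and that branch is the one we keep. Backward uniqueness on $W^u$ gives a trajectory $(U(t),\gamma(t))$ defined for $t\le t_0$ with $U(t)\to u_2^*$ in $H^1$ as $t\to-\infty$. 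The semiflow from the earlier Lemma then extends it forward for all $t\ge t_0$. Global existence needs an a priori bound, which I would get by comparison: $0\le U\le M$ for a constant supersolution $M$ coming from the hypotheses on $f$ (bistability, $f<0$ for large $u$), plus energy estimates to control the $H^1$ norm. I would fix the time parametrization by translating $t$ so that $\gamma(0)=0$. Since $\gamma_t=rg(\gamma)$ is autonomous and the branch is unique, this selects a unique $u^*(z,t)$, and $u^*(x-\gamma(t),t)$ solves \eqref{eq:main} by undoing the change of variables $z=x-\gamma(t)$.

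Finally I would show the \emph{local pullback attraction} property. For $s\to-\infty$, take initial data $u(\cdot,s)$ in a fixed $H^1$-neighborhood $B$ of $u_2^*$ at $\gamma(s)\approx-a$. In the compactified picture these initial points $(u(s),\gamma(s))$ lie in a neighborhood of the saddle and approach the stable fiber $\{\gamma=-a\}$. By the $\lambda$-lemma, or equivalently the exponential dichotomy and invariant foliation of the unstable manifold from \cite{batesjones}, their forward orbits converge to $W^u$ uniformly on compact time intervals $[T,t]$. Hence $\mathrm{dist}(u(t;s,B),U(t))\to 0$ as $s\to-\infty$ for each fixed $t$, which is pullback attraction in the sense of \cite{kloeden2011nonautonomous}.

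The main obstacle I expect is the spectral step together with the unstable manifold theorem in this infinite-dimensional, non-self-adjoint setting. The drift term $rg(\gamma)\partial_z$ couples the $\gamma$ and $u$ directions, and the essential spectrum must be verified to be bounded away from the imaginary axis. The pullback step is the other delicate point, since it needs a uniform neighborhood and an inclination-lemma argument for semiflows rather than flows. To handle it I would first try the invariant stable foliation provided by \cite{batesjones}.
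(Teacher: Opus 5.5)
Your proposal is correct and follows the paper's proof. The block-triangular linearization at $(u_2^*,-a)$ has the single unstable eigenvalue $rg'(-a)>0$, which (H5) guarantees since it already assumes $g'(\pm a)\neq 0$. Bates--Jones then gives a one-dimensional unstable manifold whose $\gamma>-a$ branch is pushed forward by the semiflow and reparametrized so that $\gamma(0)=0$, and your global-existence and fibrewise-contraction (pullback) arguments fill in steps the paper only asserts. There are two small caveats, both of which the paper also passes over. First, the coupling $rg(\gamma)u_z$ maps $H^1\times[-a,a]$ only into $L^2$, so the unstable-manifold theorem is most cleanly applied in Henry's analytic-semigroup setting with $H^1=X^{1/2}$. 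Second, the condition $f<0$ for large $u$ that you use for global existence holds for the cubic model but is not among (H1)--(H5).
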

 
\begin{proof}
Linearizing (\ref{main_mov}) at the steady state $(u^*_2,-a)$ we obtain the system
\begin{align}
    u_t &= u_{zz}+f_u(u^*_2,H(z))u+rg'(-a) \left(\partial_z u^*_2\right)\gamma,  \nonumber \\
    \gamma_t &= rg'(-a)\gamma.\label{eq:linatminusinfinity}
\end{align}
This system is in skew-product form, and the linearization is diagonal, which implies that the spectrum is the union of the strictly stable spectrum associated with the stable steady state $u^*_2$ and the unstable eigenvalue $rg'(-a)$ coming from the assumptions placed upon the shift velocity function $g(\gamma)$, see (H5) below. 

It follows that for any $r>0$, the steady state $(u_2^*,-a)$ of Equation~\eqref{main_mov} has a (local) one-dimensional unstable manifold $W^u_{\rm loc}(u_2^*,-a)$, albeit lying in an ambient infinite-dimensional space: 
 \[ W^u_{\rm loc}(u_2^*,-a)= \left\{ (u,\gamma) \ | \
u=\phi(\gamma;r), \ |\gamma+a|<\delta \right\}, \]
for some $\delta>0$, and the function $\phi$ satisfies
\begin{equation} \|\phi(\gamma;r)-u^*_2\|_{H^1}\leq C r (\gamma+a), \label{eq:CMestimate} \end{equation}
for some $C>0$ and $|\gamma+a|<\delta$. For any closed interval of non-negative $r$ values the constants $C$ and $\delta$ may be chosen independently of $r$, see \cite{batesjones}. 

Since $W^u_{\rm loc}(u_2^*,-a)$ is one-dimensional, the branch of the local unstable manifold with $\gamma>-a$, as given here, is a trajectory of \eqref{main_mov}. As such, it can be propagated in forward time under the semiflow of \eqref{main_mov} and the projection, with appropriate parameterization,  onto the first variable $u\in H^1(\mathbf{R})$ is the desired pullback attractor. The fact that it is a (local) pullback attractor in the sense of \cite{kloeden2011nonautonomous} follows from its being a part of $ W^u(u_2^*,-a)$. Its uniqueness follows easily from the one-dimensionality of $ W^u(u_2^*,-a)$.
\end{proof}

 The construction in the proof of Lemma \ref{prop:pullback} mirrors that in \cite{wieczorek2023rate} but in the context of PDEs. The pullback attractor can be used to give a direct condition for rate-induced tipping.

\begin{crit}\label{criterion}
{\bf \textit{Rate-induced tipping}} (R-tipping) will occur if the pullback attractor, given by Lemma \ref{prop:pullback}, $\mathcal{U}=\{U(t): -\infty<t<+\infty\}$, satisfies both:
\begin{enumerate}
    \item[(c1)] for sufficiently large $r$, $U(t) \to u_0^*=0$, as $t\to + \infty$, and,
    \item[(c2)] for sufficiently small $r$, $U (t)\to u_2^*$, as $t\to + \infty$.
\end{enumerate} 
\end{crit} 
The former situation, described by (c1) in Criterion \ref{criterion}, is referred to simply as \textit{tipping}, while we use the term \textit{end-point tracking} for (c2) as the pullback attractor approaches the end-point $u_2^*$ of the moving base state~\cite{wieczorek2023rate}. A consequence of R-tipping occurring in this sense is that, for initial states close to the base state $u^*_2$ at large negative time, we will have tipping to extinction from these states if $r$ is sufficiently large, and end-point tracking of the moving base state if $r$ is sufficiently small. This validates that Criterion \ref{criterion} implies that irreversible R-tipping occurs in the sense defined in Wieczorek et al.~\cite{wieczorek2023rate}. 

 \begin{remark}
     We will flip back and forth between the nonautonomous equation \eqref{eq:main_mov} and its autonomous counterpart \eqref{main_mov}. The move from \eqref{main_mov} to \eqref{eq:main_mov} is achieved by simply projecting onto the first variable, but care needs to be taken in ensuring that the parametrization of $\gamma(t)$ is chosen appropriately, as indicated in the proof of Lemma \ref{prop:pullback}. From the proof of Lemma \ref{prop:pullback}, we see that the pullback attractor $\mathcal{U}$ and the unstable manifold $ W^u(u_2^*,-a)$ are essentially the same, with the distinction being that $\mathcal{U}$ is a trajectory of \eqref{eq:main_mov}, and $ W^u(u_2^*,-a)$ is an invariant manifold, but also a trajectory, of \eqref{main_mov}.
 \end{remark}

\subsection{Main Theorem} \label{main_thm}

Under fairly general assumptions about the habitat and shift functions, we prove that rate-induced tipping occurs, in the sense of Criterion \ref{criterion}, provided the displacement is large enough. Recall that $d=2a$. 

\begin{theorem}\label{prop:main}
There exists a displacement $d^*$ such that for any displacement $d>d^*$,  R-tipping occurs.
\end{theorem}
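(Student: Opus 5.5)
We have to establish both parts of Criterion \ref{criterion} for every $d>d^*$: (c2) end-point tracking for small $r$, and (c1) tipping for large $r$. The two parts rest on quite different mechanisms, so we treat them separately.

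\textbf{Tracking for small $r$ (c2).} We view the base state as a slowly moving steady state. In the moving frame \eqref{eq:main_mov}, the only non-autonomous term is $r g(\gamma) u_z$. By (H1--5), $u_2^*$ is a hyperbolically stable steady state of \eqref{main_steady}, so its linearization has spectrum bounded by some $-\mu<0$. Set $w=u-u_2^*$. Then $w$ satisfies
\[
w_t = L_2 w + N(w) + r g(\gamma)\,(\partial_z u_2^* + w_z),
\]
with $\|N(w)\|_{H^1}\le C\|w\|_{H^1}^2$. We start the argument from the estimate \eqref{eq:CMestimate}, which gives $\|w\|\le Cr$ at the edge of $W^u_{\rm loc}$. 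A variation-of-constants argument using the exponential decay of $e^{L_2 t}$ then keeps $\|w(t)\|_{H^1}\le C' r$ for all $t$, uniformly in $r$ small. The $w_z$ term is absorbed by working in $H^1$: it is a relatively bounded perturbation of size $r$, or can be handled by analytic semigroup smoothing. As $t\to\infty$ we have $\gamma\to a$ and the forcing vanishes. The trajectory then sits in the basin of $u_2^*$, so $U(t)\to u_2^*$. This part does not need $d$ large.

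\textbf{Tipping for large $r$ (c1).} Here we use the scale separation. The time the habitat takes to move from $-a+\delta$ to $a-\delta$ is $O(1/r)$. Over such a short time the PDE solution barely changes in the fixed frame $x$: by continuous dependence on time (Gronwall), $u(\cdot,t)$ stays within $O(1/r)$ of $u_2^*(x+a)$. The limit $r\to\infty$ is therefore an instantaneous jump of the habitat. Afterwards the profile is essentially $u_2^*(x+a)$, while the habitat is centred at $+a$, i.e. the state is $u_2^*(z+2a)$ in the new frame. Tipping is then reduced to showing that, for $d=2a$ large, the solution of \eqref{main_steady} starting from $u_2^*(\cdot+d)$ tends to $0$.

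For this we compare with a supersolution. Since $H$ decays exponentially, $u_2^*$ also decays exponentially (it is a pulse of an elliptic equation with $f_u(0,H)\to f_u(0,0)<0$, presumably by (H)). Hence $u_2^*(z+d)$ is exponentially small, of order $e^{-\kappa d}$, on the region where $H(z)$ is non-negligible. Outside that region $f(u,H)\approx f(u,0)$, which is dissipative near $0$. We then need an invariant region below $u_1^*$. Since $u_2^*(\cdot+d)$ is not uniformly small in $z$, we cannot simply place it under $u_1^*$. Instead we will build a supersolution $\bar u$ equal to a decaying bump travelling leftward, away from the habitat, plus a small exponentially decaying term near the habitat. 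Alternatively, we can show that $\min(\text{solution},\ \epsilon)$ arguments combined with the stability of $0$ away from the habitat give decay; we would try the supersolution first. Choose $d^*$ so that $u_2^*(\cdot+d)<u_1^*$ pointwise for $d>d^*$, which holds provided $u_2^*$ decays faster than $u_1^*$ or equally fast with a smaller constant. Then the comparison principle yields a solution below $u_1^*$, and the bistable structure sends it to $0$.

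A persistence argument then carries the conclusion from the limiting problem to large finite $r$. The limiting solution enters a small $H^1$ neighbourhood of $0$ within the basin of the stable state $0$. Continuous dependence of the semiflow on $r$, together with the $O(1/r)$ closeness established above, transfers this to all sufficiently large $r$. Stability of $u_0^*$ then gives $U(t)\to 0$.

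\textbf{Main obstacle.} The comparison step is the delicate part. We must show that the shifted base state $u_2^*(\cdot+d)$ lies in the basin of extinction of the habitat at $+a$. This means we need pointwise control of the tails of $u_2^*$ against $u_1^*$, or a well-designed supersolution, since a population far from the habitat that is still locally at high density might otherwise invade. If direct tail comparison fails, the fallback is to use $f(u,0)$: if $0$ is the only stable state of $u_t=u_{xx}+f(u,0)$ (no persistence outside the habitat), then the far-away pulse decays on its own before it can reach the new habitat, and this decay time grows with $d$.
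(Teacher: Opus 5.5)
Your architecture matches the paper's. You get (c2) from the variation-of-constants argument of Proposition~\ref{prop:tippingno}, and you reduce large $r$ to the instantaneous shift of Lemma~\ref{lemma:extinctionrinfinity} via a Gr\"onwall bound over the $O(1/r)$ fast phase.

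\textbf{The gap.} It lies in the step that carries all the weight of (c1): showing that the solution of \eqref{main_steady} started from $u_2^*(\cdot+d)$ goes extinct. The one concrete route you commit to is choosing $d^*$ with $u_2^*(\cdot+d)<u_1^*$ pointwise, and this fails for \emph{every} $d$. (H1) forces $\max u_2^*>\max u_1^*$, and the translate attains $\max u_2^*$ at its shifted peak, where $u_1^*\le\max u_1^*<\max u_2^*$. So the obstruction is the bulk of the pulse, not relative tail decay, as you yourself note earlier in that paragraph. The alternatives you list are only named.

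\textbf{What is missing.} The missing ingredient is the paper's explicit supersolution.
\begin{itemize}
\item Take $x^*<0$ with $f(u,H(x))<-\kappa u$ for $x<x^*$ and all relevant $u>0$. This is your ``no persistence outside the habitat'' condition, which the paper also uses implicitly.
\item Take $\zeta>0$ with $f(u,H(x))<0$ for $0<u\le\zeta$ and all $x$.
\item Set $\overline{u}=\zeta$ for $x\ge x^*$ and $\overline{u}=\zeta+\psi_{K,\delta}(x-x^*)e^{-\epsilon t}$ for $x<x^*$. Here $\psi_{K,\delta}$ vanishes to second order at $0$, increases to $K$ as $x\to-\infty$, satisfies $|\psi_{K,\delta}''|\le\delta$, and $K+\zeta>\max u_2^*$.
\item On $x\ge x^*$ one has $\mathcal{N}(\overline{u})=-f(\zeta,H(x))>0$. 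On $x<x^*$, writing $\psi=\psi_{K,\delta}(x-x^*)$, one has $\mathcal{N}(\overline{u})>e^{-\epsilon t}[(\kappa-\epsilon)\psi-\psi'']+\kappa\zeta>0$ for $\delta,\epsilon$ small.
\item Large $d$ is used precisely to fit $u_2^*(\cdot+d)$ under $\overline{u}(\cdot,0)$. The bulk lies deep in $x<x^*$, where the slowly rising $\psi$ is near $K$, and the right tail is below $\zeta$ on $x\ge x^*$.
\item Comparison then pushes $u$ uniformly below some $\zeta_1$, and the second supersolution $\zeta_1e^{-\kappa_1(t-t_1)}$ finishes.
\end{itemize}
The constant floor $\zeta$ is the key device. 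Inside the habitat $f$ is positive at intermediate densities, so the supersolution must stay below the level where $f$ turns positive there. The floor also supplies the slack $\kappa\zeta$ that absorbs $\psi''$ near $x^*$, where $\psi$ itself is small.

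\textbf{A secondary omission: the pre-jump phase.} Gr\"onwall over $[T_-,T_+]$ only controls the change during the fast phase. You also need the solution at $T_-$ (where $\gamma=-a+\alpha$) to be close to $u_2^*(x+a)$. Your moving-frame tracking bound is $O(r)$ and does not apply for large $r$. One must instead argue in the fixed frame, where the habitat has moved by at most $\alpha$, as in Theorem~\ref{lem:mindisp}; this is the paper's Step 1. It gives an error of $O(\alpha)$ uniformly in $r$, plus the initial offset on $W^u_{\rm loc}$, which is made small by taking $\gamma_0$ close to $-a$. So at $T_+$ the error is $O(\alpha)+o(1)$ as $r\to\infty$, not $O(1/r)$.

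\textbf{The post-jump phase.} Your argument here is finite-horizon continuous dependence while $\gamma(t)\in(a-\alpha,a)$, followed by uniform local stability of $0$. This is a legitimate alternative to the paper's Step 3, which simply reuses the supersolution for all $t\ge T_+$ and so needs no time horizon. Your version needs careful quantifier order: first fix the time $T^*$ at which the limiting solution enters the basin of $0$, then take $\alpha$ small relative to $e^{MT^*}$, then take $r$ large. It also rests on the limiting extinction result that your proposal has not yet established.
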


The picture we believe holds in some generality is that the  pullback attractor, in forward time,  jumps from the 
base state $u^*_2(z)$ down to the extinction state $u^*_0(z)$ as $r$ increases. There will be a critical value of the rate $r=r_c$ where we expect the pullback attractor will asymptote in forward time to neither the extinction nor the 
base state. The expectation is that at $r=r_c$ the pullback attractor will tend to the unstable edge state, i.e.,
\[\mathcal{U}(t)\to u^*_1(z) \hspace{2mm}{\rm as } \hspace{2mm}t \to +\infty.\]
Such an orbit would be derived from 
an orbit of \eqref{eq:main_mov} that approaches the stable (base) state as $t\to-\infty$ and the unstable (edge) state as $t \to +\infty$.
Establishing the existence of such an orbit 
is not a trivial exercise, especially as the phase space it would live in is infinite-dimensional. 
Another question is whether the critical rate is unique. In that case, there would be an isolated rate 
separating extinction from survival. 
Determining conditions under which this picture of a single transition from survival to extinction with a unique critical rate holds is an interesting challenge. 

For a fuller picture of the geometry, we will consider the compactified system~\eqref{main_mov} and the stable manifold of the edge state $(u^*_1(z),a)$ at $\gamma = a$. From \cite{batesjones}, this is an infinite-dimensional manifold in $H^1(\mathbb{R})\times [-a,+a]$, and has codimension one. It is naturally viewed as the {\it{threshold}} for the R-tipping event~\cite{wieczorek2023rate}, and it locally divides the phase space into two parts, corresponding to initial states that lead to extinction versus survival. 
%
As $r$ passes through $r_c$, the positions of both the pullback attractor and the threshold change so that they cross.
A natural question is whether the crossing is non-degenerate.
Since the pullback attractor is 
contained in the unstable manifold of $(u^*_2(z),-a)$, this is a crossing of the unstable manifold of one 
steady-state solution with the stable manifold of another, leading to a true heteroclinic connection in the compactified phase space.
The non-degeneracy translates into this being a transverse crossing in the parameter $r$.

\subsection{Specific model}
\label{sec:spec_mod}

We carefully work out the details of this picture for a specific nonlinearity $f$, habitat $H$, and shift velocity field $g$.  The explicit form of this nonlinear reaction term $f$ is given by
\begin{equation}\label{eq:rterm}
    f(u, H(x)) = -\beta^2 u + \lambda H(x)u^2 -  u^3,
    \end{equation}
where $\beta \geq 0$ and are $\lambda \geq 0$ are parameters. We set $\lambda = 4\beta$, which corresponds to the rescaled system described in \cite{hasan2023rate} that captures the Allee effect (bistability) within a habitat. The unscaled habitat function in \cite{hasan2023rate} has a parameter $a >0$ that measures how sharply it transitions
from unfavorable to favorable habitat, while $L > 0$ estimates the spatial extent of the favorable habitat when transitions are sharp enough. In this work, we use the re-scaled habitat function from \cite{hasan2023rate} that depends on $L$ alone:   \begin{equation}\label{eq:habitatf}
       H(x) =  \frac{\tanh(x+\frac{L}{2}) - \tanh(x-\frac{L}{2})}{2 \tanh (\frac{ L}{2})}.
    \end{equation} 
The verification of the hypotheses (H1-4), see Section~\ref{sec:hyp} below, for this specific model is given in Section~\ref{sec:transverse}; 
note that in \cite{hasan2023rate},
the authors effectively verify the analytical hypotheses  through a mixture of analysis and computation. 
We also show in Section~\ref{sec:transverse} that a critical-rate heteroclinic-orbit exists through careful computations. We further verify that the critical rate is unique and that the switching of the pullback attractor from survival to extinction happens non-degenerately.

The main Theorem for the general problem shows there is a critical rate $r_c$ and that it depends on the habitat displacement $d=2a$. We do not know if the hypotheses (H1-5) we set on the nonlinear term and the shift function  are sufficient to guarantee that the critical rate is unique for the general problem. It is quite possible that 
there are specific models that switch back and forth between tipping and tracking more than once as $r$ varies.

\section{Analytical results: tracking and tipping}\label{sec:rigr}

\subsection{Hypotheses}\label{sec:hyp}

   We return now to the general context and shall assume that the habitat function $H(x)$ has a local maximum at $x=0$, and  decreases monotonically as $|x|\to \infty$.

   The key hypotheses are for the reaction-diffusion equation with static habitat
   \begin{equation}\label{eq:steady_RDE}u_t=u_{xx}+f(u,H(x)).\end{equation}
  
   \begin{itemize}
   \item[(H1)] Equation \eqref{eq:steady_RDE} has exactly three steady-state (time independent) solutions: the trivial steady state $u_0^*=0$ corresponding to extinction, and two other steady-states, both of which are positive and are denoted $u_1^*(x)$ and $u_2^*(x)$, where $u_2^*(x)>u_1^*(x)$ for all $x$. Both $u_1^*(x)$ and $u_2^*(x)$ are assumed to be smooth and decay exponentially as $x\to\pm\infty$.
   
   \item[(H2)] The spectrum of the linearization of \eqref{eq:steady_RDE} at $u_0^*=0$ lies on the negative real axis.
   
   \item[(H3)] The linearization of \eqref{eq:steady_RDE} at the steady-state (pulse) $u_1^*(x)>0$ has one positive eigenvalue with the remainder of the spectrum lying on the negative real axis.
   
   \item[(H4)] The linearization of \eqref{eq:steady_RDE} at the steady-state (pulse) $u_2^*(x)>0$ has its spectrum lying on the negative real axis.
   \end{itemize}

   Note that the spectrum of the linearization of \eqref{eq:steady_RDE}, at any steady-state, is real as the linearized operator is self-adjoint. Each of the steady-states have a physical meaning in the static habitat problem: $u_0^*=0$ is the extinction state, $u_1^*(x)$ is the 
   edge state, and $u_2^*(x)$ represents a population that is surviving at its carrying capacity and stable. The codimension-one stable manifold of the edge state $u_1^*(x)$ is the Allee threshold that separates survival from extinction.

   We also make an assumption about the shift velocity field.
   
\vspace{4mm}

   \noindent (H5) The shift velocity function $g(\gamma)$ is smooth and satisfies 
   \begin{align} &g(\pm a)=0, \ g'(\pm a)\neq 0 \nonumber,\;\;\mbox{and}\;\; g(\gamma)>0\;\;\mbox{for}\;\; \gamma\in(-a,a). 
       \end{align}

\subsection{Tracking at low rate}

Our first result shows that -- for a fixed shift velocity function $g(\gamma)$ --  tipping does not occur for any rate $r$ that is sufficiently small. 

\begin{proposition}{\bf [Tracking for $r\ll 1$]} \label{prop:tippingno}
    Fix the shift velocity function $g(\gamma)$, satisfying (H5).  Then there exists a $r_0>0$ such that for any $0<r<r_0$ the  pullback attractor $\mathcal{U}(t)$ converges to the stable pulse $u_2^*$ as $t\to+\infty$.  Moreover, there exists a $C>0$ such that 
    \begin{equation}
    \label{eq:convergencersmall} \mathrm{sup}_{z\in\mathbb{R}} |\mathcal{U}(t)-u_2^*(z)|< Cr,
    \end{equation}
    for any $t$.  In other words, tipping does not occur when $r$ is sufficiently small and the solution tracks the manifold of frozen stable-pulse profiles.  
\end{proposition}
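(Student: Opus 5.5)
The approach is to work in the moving frame \eqref{eq:main_mov}. Note first that in the frame $z=x-\gamma(t)$ the habitat is \emph{fixed}: the only effect of the motion is the advection term $r g(\gamma(t)) u_z$. Since $g$ is bounded on $[-a,a]$, this term is a perturbation of size $O(r)$ of the static equation \eqref{main_steady}, uniformly in $t$. So $u_2^*$ is an approximate steady state, with residual $r g(\gamma)\partial_z u_2^*$, and $\|r g(\gamma)\partial_z u_2^*\|_{H^1}\le r\,\max|g|\,\|u_2^*\|_{H^2}$. I would therefore write the pullback attractor as
\[
u^*(z,t)=u_2^*(z)+w(z,t),
\]
and show that $w$ stays $O(r)$ in $H^1$ for all $t\in\mathbb{R}$. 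The sup-norm bound \eqref{eq:convergencersmall} then follows from the embedding $H^1(\mathbb{R})\hookrightarrow L^\infty(\mathbb{R})$.

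The equation for $w$ is
\[
w_t=\mathcal{L}_2 w+r g(\gamma(t))\,w_z + r g(\gamma(t))\,\partial_z u_2^* + N(w),
\qquad \mathcal{L}_2=\partial_{zz}+f_u(u_2^*,H(z)),
\]
where $N(w)=O(\|w\|_{H^1}^2)$ in $H^1$ for small $w$; this uses that $f$ is $C^2$ and the $L^\infty$ embedding. By (H4), $\mathcal{L}_2$ is self-adjoint with spectrum in $(-\infty,-\mu]$ for some $\mu>0$. I would take the point spectrum to be bounded away from zero and the essential spectrum to be bounded away by $f_u(0,H(\pm\infty))<0$, as implied by (H2). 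Hence $\|e^{\mathcal{L}_2 t}\|_{H^1\to H^1}\le Me^{-\mu t}$. The term $r g w_z$ is a small non-autonomous perturbation, and I expect it to be the one technical point. I would handle it in one of two ways:
\begin{itemize}
\item by an energy estimate using the quadratic form of $-\mathcal{L}_2$, which is equivalent to the $H^1$ norm squared by the spectral gap. Integrating by parts, the transport term contributes nothing to $\int w\,w_z$, and contributes only $O(r)\|w\|_{H^1}^2$ at the derivative level through $f_u(u_2^*,H)'$;
\item or by absorbing $r g w_z$ as a relatively bounded perturbation, using analytic smoothing $\|\partial_z e^{\mathcal{L}_2 t}\|\le Mt^{-1/2}e^{-\mu t}$.
\end{itemize}
Either route yields a decay rate of at least $\mu/2$ for the linear evolution family once $r<r_0$.

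The main step is a bootstrap on the variation-of-constants formula. From Lemma~\ref{prop:pullback} and \eqref{eq:CMestimate}, $\|w(t)\|_{H^1}\le Cr(\gamma(t)+a)\to0$ as $t\to-\infty$. In particular, $\|w(t_0)\|\le K r$ at some very negative $t_0$. Suppose $\|w\|_{H^1}\le 2Kr$ on $[t_0,T]$. Duhamel's formula gives
\[
\|w(t)\|\le Me^{-\mu(t-t_0)/2}\|w(t_0)\| + \tfrac{2M}{\mu}\bigl(r\max|g|\,\|u_2^*\|_{H^2}+C_N (2Kr)^2\bigr).
\]
Choosing $K$ large relative to $M$, $\mu$ and $\max|g|$, and then $r_0$ small so that the quadratic term is dominated, the right side is strictly less than $2Kr$. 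Hence $T=\infty$, and sending $t_0\to-\infty$ gives the uniform bound for all $t$.

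Convergence to $u_2^*$ as $t\to+\infty$ is a separate step, since the $O(r)$ bound alone does not give it. As $t\to\infty$ we have $\gamma(t)\to a$ exponentially, because $g'(a)<0$ by (H5) and the sign of $g$. So the forcing $rg(\gamma)\partial_z u_2^*$ decays exponentially, and the advection coefficient does too. Applying the same Duhamel estimate from a large time $t_1$, the forcing contribution is bounded by a convolution of $e^{-\mu s/2}$ with an exponentially decaying function. It therefore tends to zero, and $w(t)\to0$ in $H^1$. Equivalently, $(u^*,\gamma)$ enters the basin of the hyperbolic sink $(u_2^*,a)$ of \eqref{main_mov}, which is itself a hyperbolic sink because $g'(a)<0$.
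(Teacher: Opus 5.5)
Your proposal is correct and is essentially the paper's argument. You perturb about $u_2^*$ and use an evolution family for $\mathcal{L}_2+rg(\gamma(t))\partial_z$ that decays exponentially, uniformly in $r<r_0$; your energy or relative-boundedness argument actually justifies this decay, which the paper only asserts. You then close a quadratic bootstrap on the Duhamel formula, starting from data on $W^u_{\rm loc}(u_2^*,-a)$ controlled by \eqref{eq:CMestimate}, and get convergence as $t\to+\infty$ from the decaying forcing and the stability of $(u_2^*,a)$, as the paper does.

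One small repair is needed in the bootstrap. With only $\|w(t_0)\|_{H^1}\le Kr$, the term $M\|w(t_0)\|$ already exceeds $2Kr$ whenever $M\ge 2$, so enlarging $K$ does not close the argument. Instead, take $t_0$ negative enough that $\|w(t_0)\|_{H^1}\le r$, which is possible since $\|w(t)\|_{H^1}\to 0$ as $t\to-\infty$. Alternatively, work in the adapted norm $\langle -\mathcal{L}_2 w,w\rangle^{1/2}$, where $M=1$.
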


\begin{proof}

Let $u(z,t)=u_2^*(z)+p(z,t)$ and obtain the following PDE for the perturbation $p(z,t)$:
\begin{equation}   \label{eq:pevolve} 
p_t=p_{zz}+rg(\gamma(t))p_z+f_u(u^*_2(z),H(z))p+rg(\gamma)\frac{\partial}{\partial z} \left(u^*_2(z)\right) +N(z,p),\end{equation}
where $N$ represents nonlinear terms in $p$.

Consider $0<r<r_0$ for some $r_0$ sufficiently small.  Then there exists a $T_0(r)$ for which $\gamma(T_0)=\gamma_0$ for some fixed $-a<\gamma_0<-a+\delta$.  We consider initial data $p(z,T_0)=\phi(z,\gamma_0;r)-u^*_2(z)$ lying within $W^u(u^*_2,-a)$.  Note also that $\|p(\cdot,T_0)\|_{H^1}\leq C r$ by (\ref{eq:CMestimate}).

 The  linear part of \eqref{eq:pevolve} generates a family of $C^0$ semigroups, which we denote $\Phi(t,t_0)$ and which satisfies  
\[ \|\Phi(t,t_0)\|_{H^1(\mathbb{R})}\leq K e^{-\omega (t-t_0)}, \]
for some $0<\omega<\rho$ and some $K\geq 1$.  This estimate holds  for any $0<r<r_0$ with $K$ and $\omega$ independent of $r$. 

If  $p\in B_\kappa(0)$ (the ball of radius $\kappa$ in $H^1(\mathbb{R})$) there exists a constant $C(\kappa)$ such that $\|N(\cdot,p)\|_{H^1}\leq C(\kappa) \|p\|_{H^1}^2$.  Consider the integral form of (\ref{eq:pevolve}):
\begin{equation}\label{eq:pimplicit}
p(z,t)=\Phi(t,T_0)p(z,T_0)+r\int_{T_0}^t  \Phi(t,s)\left[g(\gamma(s)) \frac{\partial}{\partial z} u^*_2(z)\right]\mathrm{d}s +\int_{T_0}^t \Phi(t,s) N(p(z,s),z)\mathrm{d}s .\end{equation}

Since the profile $u^*_2(z)$ is obtained as the solution to a non-autonomous (in $z$) ODE and $f(u,H(z))$ is smooth in both arguments it follows that the profile $u^*_2$ inherits the same degree of regularity and $\partial_zu^*_2$ is therefore an element of $H^1(\mathbb{R})$.  Let 
\[ \Theta(t)=\sup_{T_0\leq \tau \leq t} \|p(\cdot,\tau)\|_{H^1(\mathbb{R})}. \]
From (\ref{eq:pimplicit}) we obtain
 \begin{align} \nonumber  \| p(\cdot,\tau)\|_{H^1} \leq K e^{-\omega(\tau-T_0)}  
\|p(\cdot,T_0)\|_{H^1}&+r\int_{T_0}^\tau K e^{-\omega (\tau-s)} |g(\gamma(s))|\|\partial_z u^*_2\|_{H^1} \mathrm{d}s \\
&+C(\kappa) \int_{T_0}^\tau K e^{-\omega (\tau-s)} \|p(\cdot,s)\|^2_{H^1}\mathrm{d}s, \end{align}
so that there exists a constant $C_2$, independent of $t$, such that 
\begin{equation} \label{eq:thetabd}
    \Theta(t) \leq \Gamma(t)+C_2\Theta(t)^2, 
\end{equation}
where 
\[ \Gamma(t) =K  
\|p(\cdot,T_0)\|_{H^1} +r\sup_{T_0\leq \tau\leq t}\left\{ \int_{T_0}^\tau K e^{-\omega (\tau-s)} |g(\gamma(s))|\|\partial_z u^*_2\|_{H^1} \mathrm{d}s\right\}.\]
Observe that $\Gamma(T_0)=K \|p(\cdot,T_0)\|_{H^1}$ and  $0<\Gamma(t)<Cr$ for some constant $C$ and any $t\geq T_0$. Additionally, note that  $\Theta(T_0)=\|p(\cdot,T_0)\|_{H^1}$.  These facts combine to imply that  
\[ C_2\Theta(T_0)^2-\Theta(T_0)+\Gamma(T_0)>0.\]
Now fix $r_0$ sufficiently small so that the polynomial $C_2\Theta^2-\Theta+\Gamma(t)$ (see (\ref{eq:thetabd})) has the real root
\[ \Theta_-(t)=\frac{1}{2C_2}\left(1-\sqrt{1-4C_2\Gamma(t)}\right), \]
for all $t\geq T_0$ and furthermore that $\Theta_-(t)<\kappa$. The PDE is well posed and therefore $\Theta(t)$ is continuous.  The inequality (\ref{eq:thetabd}) implies that
\[ \Theta(t)\leq \Theta_-(t)\]
for all $t>T_0$.  Since $\Gamma(t)\leq C_1r$ this bound implies \eqref{eq:convergencersmall} for some $C>0$.  Finally, for $t$ sufficiently large we have that the orbit enters a neighborhood of the steady state $(u^*_2,a)$.  Asymptotic stability of this steady state then establishes convergence in the limit as $t\to\infty$ in the $H^1$ norm.   This implies convergence in $L^\infty$, and the Lemma follows.

\end{proof}

\subsection{Small displacement}

As discussed above, an interesting result in our case is that if the displacement is too small then no tipping occurs regardless of the rate. This is an effect that only occurs in the situation we consider in which the habitat moves from one position to another, and cannot be seen in the case of an unbounded linear drift;
as seen in \cite{berestycki2009can} and \cite{hasan2023rate}.

\begin{theorem} {\bf [Minimum displacement requirement]} \label{lem:mindisp} If the displacement $d=2a\ll 1$ then the pullback attractor $\mathcal{U}=U(t)$ converges to $u_2^*$ as $t\to\infty$ for any $r>0$ and no R-tipping occurs.
\end{theorem}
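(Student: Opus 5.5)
The plan is to work in the original (non-moving) frame \eqref{eq:main} rather than the moving frame, because for small displacement the total change in the forcing is small uniformly in $r$, while the velocity term $r g(\gamma)u_z$ in \eqref{main_mov} is not small when $r$ is large. Write the pullback attractor as $v(x,t)=u^*(x-\gamma(t),t)$, which solves $v_t=v_{xx}+f(v,H(x-\gamma(t)))$, and compare it with the frozen profile at the initial location, $u_2^*(x+a)$. The key observation is that for every $t$ we have $|\gamma(t)+a|\le 2a=d$, so
\[
\sup_{x}\bigl|H(x-\gamma(t))-H(x+a)\bigr|\le \|H'\|_{L^\infty}\, d ,
\]
and the same holds in $H^1$-type norms for $f(w,H(x-\gamma(t)))-f(w,H(x+a))$ with $w$ bounded, using that $f$ is $C^2$ and $H$ is smooth with exponentially decaying derivatives. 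Thus \eqref{eq:main} is an $O(d)$ perturbation of the static equation \eqref{eq:steady_RDE} centred at $-a$, uniformly in time and uniformly in $r>0$.

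The argument then follows the proof of Proposition~\ref{prop:tippingno} with $r$ replaced by $d$. Set $p(x,t)=v(x,t)-u_2^*(x+a)$. It satisfies
\[
p_t=p_{xx}+f_u\bigl(u_2^*(x+a),H(x+a)\bigr)p+F(x,t)+N(x,p,t),
\]
where $F(x,t)=f\bigl(u_2^*(x+a),H(x-\gamma(t))\bigr)-f\bigl(u_2^*(x+a),H(x+a)\bigr)$ obeys $\|F(\cdot,t)\|_{H^1}\le C d$ uniformly in $t$ and $r$, and $N$ is quadratic in $p$ up to an $O(d)\,p$ correction. The linear part is now autonomous and, by (H4), generates a semigroup with decay $Ke^{-\omega t}$ independent of $r$; the $O(d)\,p$ terms are absorbed into the decay for small $d$. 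For the initial data, I would take $T_0\to-\infty$. Since $\mathcal U$ tends to $u_2^*$ as $t\to-\infty$ (Lemma~\ref{prop:pullback}), $\|p(\cdot,T_0)\|_{H^1}\to0$. I would not use \eqref{eq:CMestimate} here, because its constant depends on $r$. The variation-of-constants formula and the quadratic $\Theta$-inequality from Proposition~\ref{prop:tippingno} then give $\sup_t\|p(\cdot,t)\|_{H^1}\le C'd$ for $d$ small, independently of $r$.

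To finish, consider times with $\gamma(t)$ close to $a$. There the profile $v(\cdot,t)$ is within $C'd$ of $u_2^*(x+a)$, which in turn is within $O(d)$ of $u_2^*(x-a)$ in $H^1$, since translation by $d$ is Lipschitz on the smooth, exponentially decaying profile. Hence the trajectory enters a fixed neighbourhood of the asymptotically stable state $(u_2^*,a)$ of \eqref{main_mov}, and this neighbourhood can be chosen uniformly in $r$ on compact $r$-sets. For large $r$ the moving-frame term is awkward, so I would instead use stability of $u_2^*(x-a)$ for the autonomous limit in the $x$-frame, noting that $\gamma(t)\to a$ exponentially. This gives convergence to $u_2^*$, and hence no tipping for any $r$.

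The main obstacle is uniformity in $r$. Both the $r$-dependence of the unstable-manifold estimate and the term $rg u_z$ must be avoided, which is why the analysis is done in the fixed frame. There one must also check that the nonautonomous forcing perturbation, including its $x$-derivative in $H^1$, is bounded by $Cd$ uniformly. A further point to check is that at small $d$, $u_2^*$ remains a stable steady state with a spectral gap that does not degenerate.
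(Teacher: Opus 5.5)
Your proposal is correct and is essentially the paper's argument. The paper likewise leaves the moving frame (it uses the fixed coordinate $z=x-a$), treats $f(u,H(z-\gamma(t)+a))-f(u,H(z))=O(a)$ as a forcing bounded uniformly in $r$, closes the same quadratic $\Theta$-bootstrap with the $r$-independent semigroup of $\partial_z^2+f_u(u_2^*,H)$, and finishes with local asymptotic stability of $(u_2^*,a)$.

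The only difference is the reference profile. The paper linearizes about the final pulse $u_2^*(x-a)$, so its forcing tends to zero as $t\to\infty$, but its initial mismatch $\|u_2^*(\cdot+2a)-u_2^*\|_{H^1}$ is only $O(a)$ and is small only because $d$ is. You linearize about $u_2^*(x+a)$, so the initial mismatch genuinely vanishes as $T_0\to-\infty$, and you pay the $O(d)$ translation cost at the end instead.
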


\begin{proof} The proof mimics that of Proposition~\ref{prop:tippingno}.  Starting with the equation (\ref{eq:main}), we let $z=x-a$ (note this  is a different $z$ from that used before as now the time-dependent shift will be dominated) to obtain the PDE
\begin{equation} u_t=u_{zz}+f\left(u,H(z-\gamma(t)+a)\right). \label{eq:uforsmalldisp}\end{equation}
Let $u(z,t)=u^*_2(z)+p(z,t)$.  Then the perturbation $p(z,t)$ obeys the equation
\begin{equation} p_t=\mathcal{L}p +R(z,p)+N(z,p),\label{eq:pevolforfastshift} \end{equation}
where 
\begin{eqnarray*}
    \mathcal{L}&=&\partial^2_z+f_u(u^*_2(z),H(z)), \\
    R(z,p,t)&=&f(u^*_2+p,H\left(z-\gamma(t)+a)\right)-f(u^*_2+p,H(z)), \\
    N(z,p)&=& f(u^*_2+p,H(z))-f(u^*_2,H(z))-f_u(u^*_2,H(z))p.
\end{eqnarray*}
If $p\in B_\kappa(0)\subset H^1(\mathbb{R})$ then, as in the proof of Proposition~\ref{prop:tippingno}, it holds that there exist a $C(\kappa)$ such that  
$\|N(\cdot,p)\|_{H^1(\mathbb{R}}\leq C(\kappa)\|p\|^2_{H^1(\mathbb{R})}$.  Define
\[ \Delta H(z,t)=H\left(z-\gamma(t)+a)\right)-H(z).\] 
Since $H$ is smooth there exists a constant $C>0$, independent of $r$,  such that 
\begin{equation} \|\Delta H (\cdot,t)\|_{L^\infty}<Ca, \label{eq:DeltaHbound} \end{equation}
while convergence of $\gamma(t)\to a$ as $t\to\infty$ implies that 
\[ \|\Delta H(\cdot,t)\|_{L^\infty}\to 0\],
as $t\to\infty$. Suppose that $p\in B_\kappa(0)\subset H^1(\mathbb{R})$.  Since the function  $f$ is smooth in each of its arguments and therefore locally Lipschitz there exists a constant $L(\kappa)$ for which 
\[ \|R(z,p,t)\|_{H^1}\leq L(\kappa) \|\Delta H(z,t)\|_{L^\infty}\leq M(\kappa)a, \]
where the second inequality follows from (\ref{eq:DeltaHbound}).  
 We then write the PDE (\ref{eq:pevolforfastshift}) in the integral form
\[ p(z,t)=e^{\mathcal{L}(t-T_0)}p(z,T_0)+\int_{T_0}^t e^{\mathcal{L}(t-s)}R(z,p,s)\mathrm{d}s +\int_{T_0}^t e^{\mathcal{L}(t-s)}N(z,p)\mathrm{d}s.\]
so that for any $T_0\leq \tau\leq t$,
\begin{eqnarray}
    \|p(\cdot,\tau)\|_{H^1}\leq Ke^{-\rho (\tau-T_0)}\|p(\cdot,T_0)\|_{H^1} &+&KM(\kappa) a \int_{T_0}^\tau e^{-\rho(\tau-s)}\mathrm{d}s  \nonumber \\
    &+&KC(\kappa) \int_{T_0}^\tau e^{-\rho(\tau-s)}\|p(\cdot,s)\|^2_{H^1}\mathrm{d}s.   
\end{eqnarray}
Define $\Theta(t)$ as in the proof of Proposition~\ref{prop:tippingno}.  Then $\Theta(t)$ must satisfy the inequality
\begin{equation} \Theta(t)\leq K\Theta(T_0)+K_0a+K_2\Theta(t)^2. \label{eq:inequalityTheta} \end{equation}
Since the PDE (\ref{eq:uforsmalldisp}) is well posed $\Theta(t)$ is continuous in $t$. Consequently (\ref{eq:inequalityTheta}) must hold for all $t>T_0$ for which $\Theta(t)<\kappa$.  If $\|p(\cdot,T_0)\|_{H^1}=\Theta(T_0)\leq \frac{1}{4K}-\frac{K_0}{K}a$ then actually  $\Theta(t)\leq \Theta_-$ for all $t$ where
\[ \Theta_-=\frac{1}{2K_2}\left(1-\sqrt{1-4K_0a-4K\Theta(T_0)}\right).\]

To complete the proof, for any $r>0$, we can take initial data on the local unstable manifold $W^u(u^*_2,a)$ by taking $T_0$ sufficiently negative so that $-a<\gamma(T_0)<-a+\delta$; see Lemma~\ref{prop:pullback}.  By taking $T_0$ smaller we can ensure that $\Theta(T_0)<\Theta_-(T_0)<\kappa$.  The fact that $\|R(z,p,t)\|_{H^1}\to 0$ as $t\to\infty$ combined with the local asymptotic stability of the state $(u^*_2,a)$ implies convergence to $u^*_2$ as $t\to\infty$.

\end{proof}

\subsection{Large displacement: proof of Theorem \ref{prop:main}}

To prove Theorem \ref{prop:main}, we need end-point tracking of the moving base state
for $r$ small and tipping to extinction for $r$ large enough. As shown in Proposition \ref{prop:tippingno}, end-point tracking holds for $r\ll 1$ independently of the displacement $d$. We need to show, therefore, that tipping to extinction occurs for fast shifts  when the displacement is large enough

As a precursor to the analysis of the fast shift $(r\gg 1)$ regime, we consider the case of an instantaneous habitat shift where the favorable habitat is instantaneously shifted from being centered at $x=-a$ to $x=a$.  It is more convenient to analyze this system in the original $x$ coordinate frame where we consider the following IVP:
\begin{equation}\label{eq:instantIVPina}
\begin{split}
    u_t&= u_{xx} +f(u,H(x-a)) \\
     u(x,0)& =u_2^*(x+a).
\end{split}
\end{equation}
In fact, since we plan to use the habitat displacement as a parameter we will actually consider the equation
\begin{equation}\label{eq:instantIVPintilde}
\begin{split}
    u_t&= u_{xx} +f(u,H(x)) \\
     u(x,0)& =u_2^*(x+2a),
\end{split}
\end{equation}
where we have effectively translated the spatial domain in (\ref{eq:instantIVPina}) by $a$ units so that the stable pulse is centered at $x=0$ as $t\to\infty$.  To avoid overburdening notation, we continue to use $x$ as the spatial coordinate.  

\begin{lemma}{\bf [Extinction for instantaneous habitat shift]} \label{lemma:extinctionrinfinity}    Let $d=2a>0$ be sufficiently large.  Then the solution of the initial value problem (\ref{eq:instantIVPintilde}) satisfies
\[ \lim_{t\to \infty} \sup_{x\in\mathbb{R}} \left| u(x,t)\right| = u_0^* = 0.\]
\end{lemma}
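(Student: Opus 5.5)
We will show that for large $a$ the initial datum $u_2^*(x+2a)$ is, in $H^1(\mathbb{R})$, close to a function that lies in the basin of attraction of $u_0^*=0$ for the static equation $u_t=u_{xx}+f(u,H(x))$. Comparison arguments will then push the solution to zero.

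The key observation is that $u_2^*(\cdot+2a)$ is an exponentially decaying pulse centered at $x=-2a$, far from the habitat centered at $0$. There $H(x)$ is exponentially small, so the reaction is essentially $f(u,0)$. For large $|x|$ the dynamics should be extinction-dominated, since by (H2) $0$ is stable and $f_u(0,H)<0$ away from the habitat.

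\emph{Step 1: a supersolution.} By (H2) and the decay of $H$, there are $\mu>0$ and $R>0$ with $f_u(0,H(x))\le -2\mu$ for $|x|\ge R$. Using $f\in C^2$ and $f(0,H)=0$, there is $\eta>0$ with $f(u,H(x))\le -\mu u$ for $0\le u\le\eta$ and $|x|\ge R$. Note that $u_2^*$ is bounded by some $M$, but $M$ need not be small, so the pulse itself is not below $\eta$.

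\emph{Step 2: the far-field dynamics of the pulse.} Compare first with the spatially homogeneous equation $v_t=v_{xx}+f(v,H_\infty)$ with $H_\infty=0$; more carefully, use $\sup_{|x|\ge R}H$. The question is whether a pulse of height $\max u_2^*$ and width $O(L)$ collapses to zero in the pure extinction environment. In the specific model $f(u,0)=-\beta^2u-u^3<0$ for $u>0$, so $\dot w=f(w,0)$ drives any constant to $0$. In general we assume, as is implicit in the statement, that $f(u,H)<0$ for $u>0$ when $H$ is small. Then the spatially constant solution $w(t)$ of $\dot w=\max_{H\le H(R')}f(w,H)$, with $w(0)=M$, is a supersolution on $|x|\ge R'$ and decays exponentially to zero.

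To handle the region $|x|\le R'$, use a supersolution of the form $\bar u(x,t)=\min\{w(t)+Ce^{\mu' t}e^{-\alpha(x+2a)_+},\ \dots\}$. A cleaner route works in two stages.

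(i) Run the equation for a fixed time $T$, chosen independent of $a$, so that the constant supersolution $w(T)<\eta$. During $[0,T]$, the parabolic comparison principle shows that the solution stays below $w(t)$ plus the influence of the habitat region. That influence is exponentially small because the initial mass near $|x|\le R$ is $O(e^{-c a})$. Concretely, bound the solution by the solution of the linear problem $\bar u_t=\bar u_{xx}+K\bar u$, with $K=\sup|f_u|$ on $[0,M]$, starting from $u_2^*(x+2a)$. The heat kernel estimate then gives $\bar u(x,t)\le Me^{KT}e^{-c(2a-|x|)}$ for $|x|\le a$.

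(ii) Hence at time $T$ we have $u(\cdot,T)\le\eta$ everywhere and $\|u(\cdot,T)\|_{H^1}\le \epsilon(a)\to0$ on the habitat region. Use parabolic smoothing for the $H^1$ bound.

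\emph{Step 3: conclude.} By (H2), $u_0^*=0$ is exponentially stable in $H^1$ for the static equation. It therefore has a basin containing $B_\kappa(0)$. Make sure $u(\cdot,T)$ lies in it: its $H^1$ norm is not small globally, since the tail near $-2a$ has norm of order $w(T)$. So choose $T$ so that $w(T)$ is small enough, and control the gradient by parabolic regularity, giving $\|u(\cdot,T)\|_{H^1}<\kappa$. Alternatively, use a linear supersolution outside $[-R,R]$ together with the $L^\infty$ bound. Exponential stability then yields $\sup_x|u(x,t)|\to0$.

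The main obstacle is Step 2. It requires matching the fixed-time decay of the far pulse, which needs the sign condition $f(u,H)<0$ for small $H$ and is not obviously contained in (H1)--(H4), with the exponentially small leakage into the habitat, uniformly as $a\to\infty$. I would try first to build explicit supersolutions from the ODE $w(t)$ and a shifted exponential tail. If the general hypotheses do not give the sign condition, I would impose it, noting that it holds for \eqref{eq:rterm}.
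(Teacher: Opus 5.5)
Your argument is correct in outline, but it is organized differently from the paper's.

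\textbf{The paper's route.} The paper stays entirely within the comparison principle and builds a single explicit supersolution valid for all $t\ge0$. It is a small constant $\zeta$ on the half-line $x\ge x^*$, which contains the habitat. This is glued in $C^2$ fashion to $\zeta+\psi_{K,\delta}(x-x^*)e^{-\epsilon t}$ on $x<x^*$, where $\psi_{K,\delta}$ is a very flat ramp to height $K$ with $K+\zeta>\max u_2^*$. Taking $a$ large tucks the displaced pulse under this plateau. The supersolution then relaxes uniformly to $\zeta$, and a second supersolution $\zeta_1e^{-\kappa_1(t-t_1)}$ gives the decay.

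\textbf{Your route.} You use comparison only up to a fixed time $T$ independent of $a$. You control the far field with the ODE supersolution $w$ and the leakage into the habitat with a linear heat-kernel bound. You then hand over to the exponential stability of $0$ in $H^1$ furnished by (H2).

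\textbf{What each buys.} The paper's route is more elementary: it needs no parabolic smoothing and no linearized-stability theorem. Its pointwise bounds are also uniform in the habitat's position, which is what lets the same construction be reused for $H(x-\gamma(t))$ in the $r\gg1$ lemma. Its price is a pointwise hypothesis that (H2) does not supply. The constants $\zeta$ and $\zeta_1e^{-\kappa_1 t}$ are supersolutions only if $f(u,H(x))<0$ for small $u>0$ uniformly in $x$, including inside the habitat, i.e.\ essentially $f_u(0,H(x))<0$ everywhere. That holds for \eqref{eq:rterm}, where $f_u(0,H)=-\beta^2$. But (H2) alone permits $f_u(0,H(x))>0$ on part of the habitat, and your argument needs only (H2) there.

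Both proofs rely on the far-field condition $f(u,H)<0$ for all $u>0$ when $H$ is small. The paper presents it as a consequence of $H\to0$, while you correctly flag that it is not contained in (H1)--(H4).

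\textbf{Points to make explicit when writing yours out.} First, $u\le w(t)$ on $|x|\ge R'$ is a comparison on the two half-lines with lateral data $u(\pm R',t)\le w(t)$. Your heat-kernel bound supplies this once $a$ is large relative to $T$. Second, $f(u,H)\le Ku$ requires an a priori bound $u\le\bar M$ from a constant supersolution, with $K$ taken over $[0,\bar M]$; for \eqref{eq:rterm} any $\bar M\ge\beta(2+\sqrt3)$ works. Third, the remnant near $x=-2a$ has $L^2$ norm of order $w(T)\sqrt{T}$ rather than $w(T)$. This is harmless since $w$ decays exponentially.
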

\begin{proof}
     The proof follows from the comparison principle for scalar reaction-diffusion equation after construction of a suitable super-solution.    Recall the definition of the per-capita growth rate: $\tilde{f}(u,H(x))=\frac{f(u,H(x))}{u}$.  
     We construct the super-solution using the following facts. First, for $x$ sufficiently large the per-capita growth rate of the population is strictly negative for all positive population densities.  Second, for any $x$ the per capita growth rate is strictly negative for all sufficiently small population densities.

    To the first point, since $H(x)\to 0$ as $x\to\pm\infty$ there exists a value $x^*<0$ for which the per-capita growth rate is strictly negative for all positive $u$.  This implies that for $x<x^*$ we have that $f(u,H(x))<-\kappa u$ for some $\kappa>0$.  To the second point, we have that there exists a  $\zeta>0$ such that $f(u,H(x))<0$ for all $x\in \mathbb{R}$.    
    We claim the existence of a monotone decreasing function function $\psi_{K,\delta}(x):\mathbb{R}^-\to \mathbb{R}$ such that 
    \[ \psi_{K,\delta}(x)\to K \text{ as } x\to -\infty, \]
    and  $\psi_{K,\delta}(0)=\psi_{K,\delta}'(0)=\psi_{K,\delta}''(0)=0$ while $|\psi_{K,\delta}''(x)|\leq \delta$ for all $x<0$. 
    Define 
    \[ \overline{u}_{K,\delta,\epsilon}{(x,t)}=\left\{ \begin{array}{cc} \zeta & x\geq x^* \\
    \zeta+\psi_{K,\delta}(x-x^*)e^{-\epsilon t} & x<x^* \end{array}\right. , \]
    with $\zeta$ defined as above.

    We will require $\epsilon>0$ and that $K$ is fixed to satisfy  $K+\zeta >u^*_2(0)=\max_{z\in\mathbb{R}} u^*_2(z)$. We therefore proceed with $K$ and $x^*$ fixed and will show that there exists a $\delta$ and $\epsilon$ sufficiently small so that $\overline{u}_{K,\delta,\epsilon}$ is a super-solution.  

    Let 
    \[ \mathcal{N}(u{(x,t)})=u_t-u_{xx}-f(u,H(x)).\]
    A function  $u$ is a classical super-solution if $u\in C^2(\mathbb{R},\mathbb{R}^+)$ and  $\mathcal{N}(u)\geq 0$ for all $(x,t)\in \mathbb{R}\times \mathbb{R}^+$. Since $\overline{u}_{K,\delta,\epsilon}\in C^2(\mathbb{R},\mathbb{R}^+)$ we focus on the sign of $\mathcal{N}(\overline{u}_{K,\delta,\epsilon})$.

    For $x>x^*$ the candidate super-solution is constant and we obtain  directly that
    \[ \mathcal{N}(\overline{u}_{K,\delta,\epsilon}) = -f(\zeta,x) \geq 0. \]
    Now consider $x<x^*$ where $f(u,H(x))<-\kappa u$ and compute 
    \[ \mathcal{N}\left( \zeta+\psi_{K,\delta}(x)e^{-\epsilon t} \right)>e^{-\epsilon t} \left[ \left(\kappa-\epsilon\right)\psi_{K,\delta}(x) -\psi_{K,\delta}''(x)\right]+\kappa \zeta. \]
    Since $\kappa$ is fixed, we select $\delta$ and $\epsilon$ sufficiently small so that the right hand side of the above expression is positive for all $x<x^*$.  

We now use this super-solution to prove the lemma.  Since $\lim_{x\to-\infty} \overline{u}_{K,\delta,\epsilon}(x)=K+\zeta>u^*_2(0)$ it follows that there exist a value of $d=2a$ sufficiently large such that $0<u^*_2(x+2a)<\overline{u}_{K,\delta,\epsilon}(x,0)$ for all $x\in\mathbb{R}$.  The comparison principle then implies that $u(x,t)<\overline{u}_{K,\delta,\epsilon}{(x,t)}$ for all $t>0$ and all $x\in\mathbb{R}$.
    
    By continuity of $f$ there exists a $\zeta_1>\zeta$ such that  $f(\zeta_1,H(x))<0$ for all $x\in\mathbb{R}$ and all $0\leq u \leq \zeta_1$.    Uniform convergence in time of $\overline{u}_{K,\delta,\epsilon}{(x,t)}$ to $\zeta$ implies that there exists a time $t_1$ such that $u(t_1,x)<\zeta_1$ for all $x$.    We then employ a secondary super-solution $\overline{u}_1(x,t)=\zeta_1e^{-\kappa_1 (t-t_1)}$.  Comparison with this super-solution implies uniform exponential convergence of the solution of (\ref{eq:instantIVPintilde}) to the zero extinction state.

\end{proof}

\begin{lemma}{\bf [Extinction for habitat shift with $r\gg 1$]}  Let $d$ be sufficiently large.  Then there exists an $r^+(d)>0$ such that for any $r>r^+$, the pullback attractor $\mathcal{U}(t)$ converges to the extinction state $u_0^*=0$ as $t\to\infty$.
\end{lemma}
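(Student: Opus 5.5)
Our approach treats the fast shift as a perturbation of the instantaneous shift and then reuses the super-solution from Lemma \ref{lemma:extinctionrinfinity}. We work in the original $x$ frame, equation \eqref{eq:main}, after translating by $a$ as in \eqref{eq:instantIVPintilde}. The pullback attractor then satisfies
\[ u_t=u_{xx}+f(u,H(x-\gamma(t)+a)), \]
with $u\to u_2^*(x+2a)$ as $t\to-\infty$.

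The key observation is that $\gamma$ solves $\gamma_t=rg(\gamma)$, so $\gamma(t)=\Gamma_1(rt)$, where $\Gamma_1$ is the $r=1$ solution. Fix a small $\eta>0$ and a time $s_\eta>0$ such that $\Gamma_1(-s_\eta)<-a+\eta$ and $\Gamma_1(s_\eta)>a-\eta$. Then the habitat is within $\eta$ of its initial position for $t\le -s_\eta/r$ and within $\eta$ of its final position for $t\ge s_\eta/r$. The transition window therefore has length $2s_\eta/r$, which tends to $0$ as $r\to\infty$.

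The proof would proceed in three steps.

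\textbf{Step 1: control before the window.} For $t\le -s_\eta/r$ we show that $U(t)$ stays $H^1$-close to $u_2^*(x+2a)$. We would run the Gronwall/quadratic-inequality argument from the proof of Theorem \ref{lem:mindisp}, with the forcing bounded by $\|\Delta H\|_{L^\infty}\le C\eta$. We start from the local unstable manifold with the estimate \eqref{eq:CMestimate}. The resulting error is $O(\eta)$ in $H^1$ uniformly in $r$, because this argument never uses the size of $r$, only the displacement of $H$ from its frozen position.

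\textbf{Step 2: crossing the window.} On the interval $[-s_\eta/r,\,s_\eta/r]$ we bound the growth of $u$ over a time $O(1/r)$. In $L^\infty$ we compare with the spatially constant super-solution solving $\dot m=\max_{H\in[0,\max H]}f(m,H)$. This shows $\|u\|_{L^\infty}$ grows by only $O(1/r)$. In $H^1$ we write the mild solution: the reaction term is bounded on bounded sets, so the change in $u$ is $O(1/r)$. Hence at $t_1=s_\eta/r$ we have
\[ \|U(t_1)-u_2^*(\cdot+2a)\|_{L^\infty}\le C(\eta+1/r). \]

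\textbf{Step 3: after the window.} For $t\ge t_1$ the equation is $u_t=u_{xx}+f(u,H(x-\gamma(t)+a))$, where $\gamma(t)+a-2a$ lies within $\eta$ of $0$ and tends to $0$. We adapt the super-solution of Lemma \ref{lemma:extinctionrinfinity} to this slightly nonautonomous habitat. We choose $x^*$ with a margin of $1$, so that $f(u,H(x-\theta))<-\kappa u$ for $x<x^*$ and all $|\theta|\le\eta$. The constant $\zeta$ may also be chosen uniformly in $\theta$, since $f(u,H)<0$ for small $u>0$ uniformly over the compact range of $H$. The computation of $\mathcal{N}(\overline u)$ is then unchanged.

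Because $K+\zeta>u_2^*(0)$, we take $d$ large, depending only on $K$, $x^*$ and the decay of $u_2^*$, and then $\eta$ small and $r$ large. This gives $U(t_1)<\overline u_{K,\delta,\epsilon}(\cdot,0)$, using the margin $\zeta>0$ to absorb the $C(\eta+1/r)$ error. Comparison, followed by the secondary super-solution $\zeta_1e^{-\kappa_1(t-t_1)}$, then yields convergence to $0$.

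\textbf{Main obstacle.} We expect the hardest part to be the ordering of limits. The choice of $d$ must be independent of $r$ and $\eta$, while the errors from Steps 1 and 2 must be absorbed by the fixed positive margin $\zeta$ of the super-solution and by the gap $K+\zeta-u_2^*(0)$. It is also essential that Step 1's closeness estimate is uniform in $r$ for a given $\eta$. We would verify this by noting that the semigroup bounds used there depend only on $\mathcal{L}$, and that $H$ is globally Lipschitz.
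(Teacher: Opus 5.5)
Your proposal is correct and follows the paper's proof essentially step for step. The shared structure is: the same split at the times where $\gamma$ is within $\eta$ (the paper's $\alpha$) of $\mp a$; the quadratic-inequality argument from Theorem~\ref{lem:mindisp} before the window, uniform in $r$; a short-time Gronwall bound across the $O(1/r)$ window; and the translated super-solution of Lemma~\ref{lemma:extinctionrinfinity} afterwards, with your fixed-margin choice of $x^*$ usefully tidying the paper's ordering of limits so that $d$ is fixed before $\eta$ and $r$. One small correction: in Step~2 apply the $O(1/r)$ increment bound to $p=U-u_2^*(\cdot+2a)$, as the paper does, rather than to $U$ itself, because $e^{\partial_x^2\tau}-I$ is only $o(1)$, not $O(\tau)$, on $H^1$; your final bound $C(\eta+1/r)$ is unaffected.
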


\begin{proof}

Consider first differential equation for the ramp function $\gamma'=rg(\gamma)$.  Up to a shift in time we can assume that $\gamma(0)=0$.  Let $\alpha>0$.  Define
\[ T_-=-\frac{1}{r}\int_{-a+\alpha}^0 \frac{1}{g(\gamma)}\mathrm{d}\gamma, \quad  T_+=\frac{1}{r}\int_0^{a-\alpha} \frac{1}{g(\gamma)}\mathrm{d}\gamma. \]
Then $\gamma(T_-)=-a+\alpha$ and $\gamma(T_+)=a-\alpha$.
 When $r$ is large the ramp function transitions rapidly from $-a$ to $a$. The transition time from $-a+\alpha$ to $a-\alpha$ is  $\mathcal{O}\left(r^{-1}\right)$ as $r\to\infty$. 
Our proof now involves three steps:
\begin{enumerate}
    \item Restricted to the time interval  $(-\infty,T_-)$ the ramp function $\gamma(t)$ deviates only a small distance from $-a$.  We use essentially the same argument as employed in the proof of Theorem~\ref{lem:mindisp} to obtain $L^\infty$ control on this evolved profile. 
    \item During the time interval $(T_-,T_+)$ the habitat ramp function quickly evolves shifting the favorable habitat from near $x=-a$ to near $x=+a$.  However, this shift occurs in an $\mathcal{O}\left(\frac{1}{r}\right)$ time interval.  Thus, for $r$ sufficiently large the solution profile can only change a small amount during this time period. 
    \item On the interval $(T_+,\infty)$ the ramp function is close to $+a$.  We therefore use our pointwise control on the forward evolution of the pullback attractor at $t=T_+$ combined with the super-solution constructed in Lemma~\ref{lemma:extinctionrinfinity} to show pointwise uniform convergence of the solution to zero.      
\end{enumerate}

\noindent{\bf Step 1} Consider any fixed $r>0$. Let $\gamma_0$ satisfy
$-a<\gamma_0<-a+\delta$ and define $T_0$ so that $\gamma(T_0)=\gamma_0$.  If $T_0<T_-$ then we need to control the evolution of the solution up to the time $t=T_-$. To do so, we essentially replicate the proof of Theorem~\ref{lem:mindisp}.  Reverting from the coordinate $z=x-\gamma(t)$ to $x$ we write $u_1(x,t)=u^*_2(x+a)+p_1(x,t)$ where the perturbation $p_1(x,t)$ is a solution of
\begin{equation}
\frac{\partial p_1}{\partial t} = p_{1,xx}+f_u(u^*_2(x+a),H(x+a))p_1 +R_1(p_1,x,t)+N_1(p_1,x), \nonumber
\end{equation}
where
\begin{eqnarray}
    R_1(p_1,x,t)&=& f\left(u^*_2(x+a)+p_1,H(x-\gamma(t))\right) - f\left(u^*_2(x+a)+p_1,H(x+a)\right)  \nonumber \\
    N_1(p_1,x)&=& f\left(u^*_2(x+a)+p_1,H(x+a)\right)-f_u\left(u^*_2(x+a),H(x+a)\right)p_1  \nonumber\\ 
    &-&f\left(u^*_2(x+a),H(x+a)\right), 
\end{eqnarray}
with initial data 
\[ p_1(x,T_0)=\phi(x-\gamma_0,\gamma_0,r)-u^*_2(x+a),\]
recall Lemma~\ref{prop:pullback}.  Using that $0<\gamma(t)+a<\alpha$ and repeating the analysis in Theorem~\ref{lem:mindisp} we obtain -- for $\alpha$ sufficiently small -- that 
\[ \|p_1(\cdot,T_-)\|_{H^1}\leq C_\alpha \alpha +C_1\|p_1(\cdot,T_0)\|_{H^1}, \]
for some constants $C_\alpha$ and $C_1$, independent of $r$.

{\bf Step 2} Let us now consider the interval $t\in [T_-,T_+]$.  Let us denote the solution in this interval as $u_2(x,t)$ and expand $u_2(x,t)=u^*_2(x+a)+p_2(x,t)$ with initial data $p_2(x,T_-)=p_1(x,T_-)$.  Note that $p_2$ then satisfies the PDE
\[ \frac{\partial p_2}{\partial t} = p_{2,xx}+f\left(u^*_2(x+a)+p_2,H(x-\gamma(t))\right)-f\left(u^*_2(x+a),H(x+a)\right).   \]
Writing the integral form of the solution as 
\[ p_2(x,t)= e^{\partial^2_x t}p_2(x,T_-)+\int_{T_-}^t e^{\partial^2_x (t-\tau)}R_2(x,\tau)\mathrm{d}\tau + \int_{T_-}^t e^{\partial^2_x (t-\tau)}N_2(p_2,x,\tau)\mathrm{d}\tau,
\]
where 
\begin{eqnarray*}
    R_2(x,t)&=& f\left(u^*_2(x+a),H(x-\gamma(t))\right)-f\left(u^*_2(x+a),H(x+a)\right) \\
    N_2(p_2,x,t) &=& f\left(u^*_2(x+a)+p_2,H(x-\gamma(t))\right)-f\left(u^*_2(x+a),H(x-\gamma(t))\right).
\end{eqnarray*}
For $p\in B_\kappa(0)$, we use the bounds $\|N_2(p_2,x,t)\|_{H^1}\leq M \|p_2\|_{H^1}$ and $\|R_2(x,t)\|_{H^1}\leq L$ to obtain that
\[ \|p_2(\cdot,t)\|_{H^1} \leq \|p_2(\cdot,T_-)\|_{H^1} +(t-T_-) L +M \int_{T_-}^t \|p_2(\cdot,\tau)\|_{H^1}\mathrm{d}\tau. \]
An application of Gr\"onwall's inequality then yields 
\[ \|p_2(\cdot,T_+)\|_{H^1} \leq \left(\|p_2(\cdot,T_-)\|_{H^1} +L (T_+-T_-)\right)e^{M (T_+-T_-)}. \]
In particular, for any $C>1$, there exists a $r_+(C,\alpha)$ sufficiently large such that \[ \|p_2(\cdot,T_+)\|_{H^1} \leq C\|p_2(\cdot,T_-)\|_{H^1}, \]
for all $r>r_+(C,\alpha)$.  

{\bf Step 3} For the sake of exposition take $C=2$ in the preceding equation.  We have therefore established that at time $t=T_+$ the forward evolution of $\mathcal{U}(t)$ has the form $u^*_2(x+a)+p_2(x,T_+)$ where
\[ \|p_2(x,T_+)\|_{H^1} \leq 2C_\alpha \alpha +2C_1\|p_1(\cdot,T_0)\|_{H^1}. \]
This estimate is valid for any $r>r_+(2,\alpha)$  with $C_\alpha$ and $C_1$ independent of $r$.  Since $L^\infty(\mathbb{R})$ is embedded continuously in $H^1(\mathbb{R})$ we obtain pointwise control on the forward evolution of $\mathcal{U}(t)$ through some combination of taking $T_0<0$ sufficiently large or by taking $\alpha$ sufficiently small.    

On the time interval $[T_+,\infty)$ we have that $a-\alpha<\gamma(t)<a$.  We now modify the argument from Lemma~\ref{lemma:extinctionrinfinity} to complete the proof.  For $\alpha$ fixed, there exists $x_*$ and a $\kappa>0$ such that $f(u,H(x-\gamma(t)))<-\kappa u$ for any $x<x_*$ and any $t\geq T_+$.  We also obtain that  there exists a $\zeta>0$ such that $f(\zeta,H(x-\gamma(t))<0$ for all  $x\in\mathbb{R}$ and $t\geq T_+$.  Now define -- as in the proof of Lemma~\ref{lemma:extinctionrinfinity} -- the function
\[  \overline{u}_{K,\delta,\epsilon}{(x,t)}=\left\{ \begin{array}{cc} \zeta & x\geq -x_* \\
    \zeta+\psi_{K,\delta}(x-x_*)e^{-\epsilon (t-T_+)} & x<-x_* \end{array}\right. ,  \]
which is a super-solution for $\epsilon$ and $\delta$ chosen sufficiently small.

{\bf Conclusion} The forward evolution of $\mathcal{U}(t)$ is now a profile of the form
\[ u^*_2(x+a)+p_2(x,T_+).\]
By taking $a$ sufficiently large and $\alpha$ sufficiently small we can obtain sufficient control to guarantee that 
\[ u^*_2(x+a)+p_2(x,T_+)<\overline{u}_{K,\delta,\epsilon}(x,T_+). \]
We conclude as in Lemma~\ref{lemma:extinctionrinfinity}: an application of the comparison principle shows that eventually $u(t,x)<\zeta_1$ for some $\zeta_1$ for which $f(\zeta_1,H(x-\gamma(t)))<0$ for all $x$.  Then pointwise exponential decay to zero is obtained and the result is established.  
\end{proof}

\section{Determination of critical rate: uniqueness and nondegeneracy}\label{sec:comps}
A central issue is to determine the critical rate, above which there is tipping to extinction and below which there is end-point tracking of the moving stable base state. At this point, we do not know if such a critical rate is unique, and it will depend on the specific structure of the nonlinear term. Our goal is to show this for the specific model problem from Section~\ref{sec:spec_mod},  where the nonlinear term $f(u,H(z))$ is given by \eqref{eq:rterm}, and the habitat function $H(z)$ is given by \eqref{eq:habitatf}.  

\subsection{Computation of the pulses \label{comppulse}}

At the beginning and end of the habitat transition, when it can be considered stationary, the dynamics of the system are governed by the spatially inhomogeneous system  \eqref{main_steady}. Each pulse satisfies,
\eq{u_{zz}-\beta^2 u + \lambda H(z)u^2 -  u^3=0,\quad \lim_{z\to \pm \infty} u(z)=: u_{\pm} = 0.}{\notag}
We write this nonautonomous ODE as a first order system,
\eq{u'&=v,\\v'&= -f(u,H(z)),}{\label{eq:pulse_ODE1}}
where $':=\frac{d}{dz}$ and $f(u,H(z))=-\beta^2 u + \lambda H(z)u^2 -  u^3$. A pulse solution of this PDE is a non-trivial homoclinic orbit to the fixed point at the origin for the nonautonomous ODE~\eqref{eq:pulse_ODE1}. This homoclinic orbit is found numerically by solving a boundary value problem. We pose the problem as a system of double the dimension but on the half line $(-\infty,0]$. The extra dimensions correspond to the ODE with the coordinate change $z\to -z$. The new set of variables are $(u,v,\tilde u,\tilde v)$ and the system is given by
\eq{u'=v,\quad v'= -f(u,H(z)),\quad\tilde u'= -\tilde v',\quad\tilde v'=f(\tilde u,H(-z)).}{\notag}
  To numerically approximate the solution to this ODE system, we truncate the domain to a finite interval $[-Z,0]$; we use $Z = 150$. We choose boundary conditions that correspond to enforcing the solution to decay to the fixed point $(0,0)^T$ in forward and backward time, $z\to \pm \infty$, as described below. For convenience, we introduce a free parameter $\umax$ which shows  up only in the boundary conditions. The boundary conditions consist of (1) the matching conditions $u(0) = \tilde u(0)$, $v(0)=\tilde v(0)$, of (2) the projective boundary conditions $\Pi_S(u(Z),v(Z))^T = 0$ and $\Pi_U(\tilde u(Z),\tilde v(Z))^T= 0$ where $\Pi_S = (1,\frac12\sqrt{4\beta^2})$ is a projection onto the stable subspace of the fixed point $(0,0)^T$ of \eqref{eq:pulse_ODE1} and $\Pi_U=(1,-\frac12\sqrt{4\beta^2})$ is a projection onto the unstable subspace of $(0,0)^T$, and (3) a boundary condition $u(0)-\umax = 0$. This last condition forces the free variable to correspond to the maximum value of the pulse and makes it possible to select the desired pulse solution via an appropriate, rather general initial guess for $\xi$. Without this last boundary condition, the initial guess for the ODE solution has to be very close to the unstable pulse in order for the BVP solver to converge to it. Introducing the free parameter gives us the convenience of not using continuation for our chosen parameter set.

We solve this finite interval boundary value problem using the function BVP5c in MatLab, which uses the four-stage Lobatto IIIa formula. This method adaptively chooses the mesh on which to solve the solution and uses local polynomials to represent the solution, resulting in a $C^1$-continuous approximation with fith order accuracy. Internally, BVP5c treats the free parameter as an additional ODE, $\umax'=0$. For the initial guess to the solution, we set the free parameter $\umax = 0.56$ to find the stable pulse and $\umax = 0.15$ to find the unstable pulse. These guesses work for all values of $L$ that we considered. The initial guess for the solution is built, via taking the derivative and flipping the domain, from $u(z)= \umax\sech(z)$
when seeking the unstable pulse, and when seeking the stable pulse by $u(z)= \frac12 \umax(\tanh(z+L/2)-\tanh(z-L/2))$.  

The nodes on which the solution is solved will be important later. Indeed, when approximating the pullback attractor of \eqref{main_mov}, we discretize \eqref{main_mov} in the spatial direction with nodes that correspond to those used by BVP5c to solve for the stable pulse, in other words, finding the homoclinic orbit of \eqref{eq:pulse_ODE1} with largest maximum. 

To be more explicit: using a uniform mesh to discretize \eqref{main_mov} in the spatial direction results in derivative approximations that vary significantly in discretization error due to the rapid change in the solution near the origin and nearly flat behavior for large $z$. To use Chebyshev nodes, one has to take many nodes to resolve the solution near $z = 0$. Intuitevely, this makes sense because the Chebyshev nodes are densest where the solution changes the least. We find that using the same nodes to discretize \eqref{main_mov} as used by BVP5c to solve for the stable pulse works very well. Intuitively, this makes sense because the nodes selected by BVP5c are adaptively chosen to minimize the pulse solution error, which in turn depends on the change in the solution throughout its domain. Indeed, in the limit $t\to -\infty$, the solution of \eqref{main_mov} we are interested in is the stable pulse, and so the spatial discretization for the stable pulse is highly appropriate for \eqref{main_mov} and, in practice, performs well throughout the whole time interval. 

We plot the pulse solutions in Figure~\ref{fig:pulses}. We see visually (and verify numerically) that these pulses satisfy (H1), that is, $u_2^*(z)> u_1^*(z)$ for all $z$.

Note that Hasan et al.~\cite{hasan2023rate} provide an alternative method for computing pulse solutions for a static habitat as heteroclinic orbits between two fixed points of a suitably compactified autonomous ODE. Their method also extends to a linearly drifting habitat, where the pulse solutions are not specifically steady states, but traveling waves.

\begin{figure}[t]
    \centering
    \includegraphics[width=0.49\textwidth]{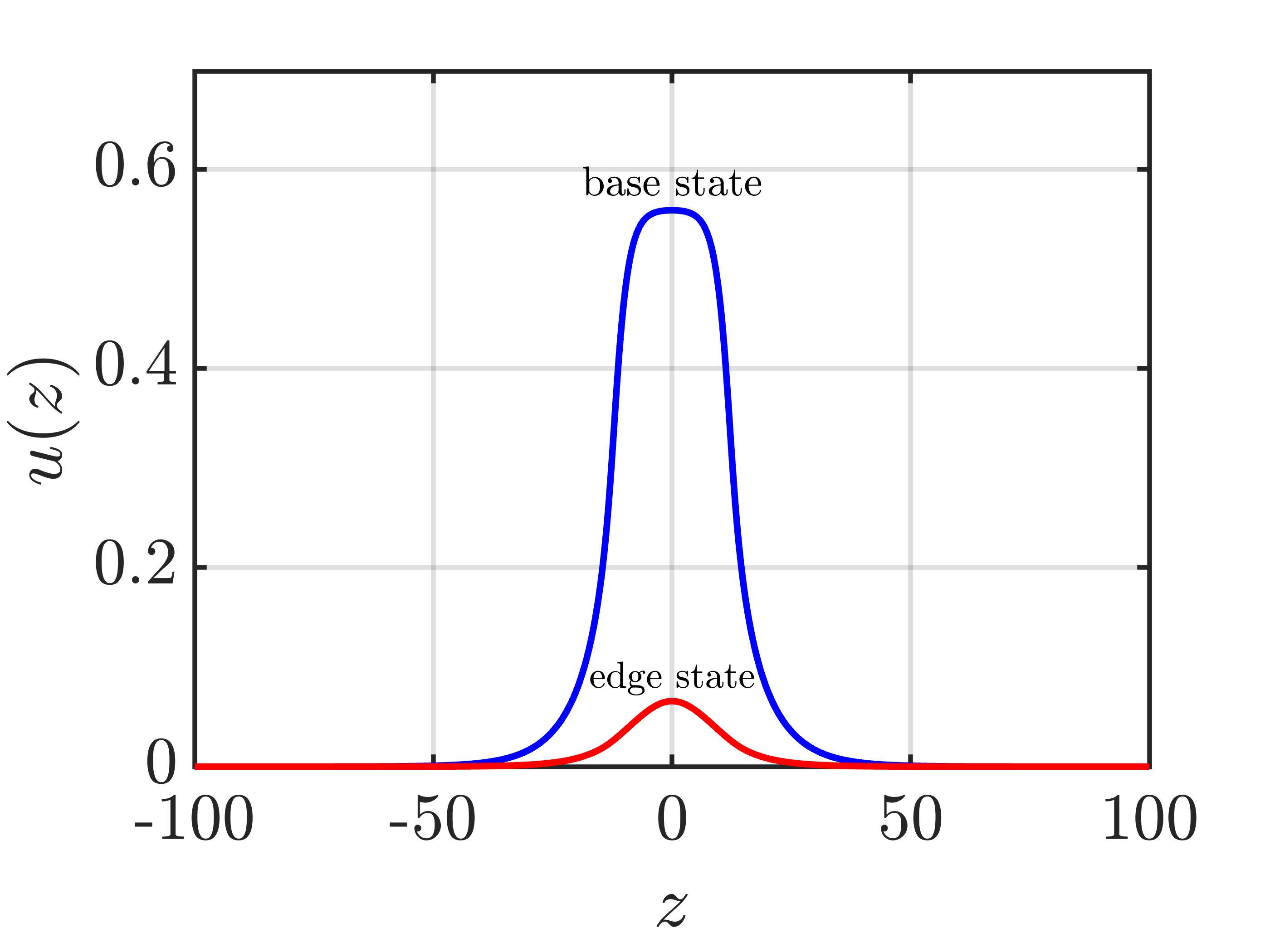}
    \caption{ Comparison of pulse profiles showing the larger-amplitude stable pulse ($u^*_2$, blue solid) and smaller-amplitude unstable pulse ($u^*_1$, red solid).
    Both profiles are computed for $\beta = 0.15$, $\lambda = 4 \beta$ and $L = 25$ using the boundary value problem described in Section~\ref{comppulse}.
    }
    \label{fig:pulses}
\end{figure}

\subsection{Spectral stability of the steady pulses}

Linearizing about a pulse solution $u^*(z)$ obtained as described in \ref{comppulse},  we obtain the eigenvalue problem
\begin{equation}\label{evalue}
\lambda p = p_{zz} - \beta^2 p + 8\beta H(z)u^*(z)p - 3{u^*}^2(z)p,
\end{equation}
where  $u^*$ is either the stable pulse $u^*_2$ or the unstable pulse $u^*_1$.

To find the eigenvalues, we follow a standard Sturm-Liouville approach, and use the angular equation for \eqref{evalue}, when it is written as a system. Set $\theta =\arctan\left(\frac{p_z}{p}\right)$, then $\theta (z)$ satisfies 
\begin{equation}\label{eq:theta_ODE}
\theta' = \lambda \cos^2{\theta} - f_u(u^*(z), H(z))\cos^2{\theta} - \sin^2{\theta}
= (\lambda - f_u(u^*(z), H(z)))\cos^2{\theta} - \sin^2{\theta},
\end{equation}
where $'$ denotes $\tfrac{d}{dz}$.  
Equation \eqref{eq:theta_ODE} determines the phase-plane rotation associated with the linearized eigenmodes. We think of $\theta$ as living in $\mathbb{R}$, which corresponds to taking the unwrapped version of arctan. 

It is natural to use a compactification again, but this time in space, rather than time. To do this set $s = h(z) = \tanh\!\left(\frac{z}{2}\right),$
 and $\tilde{H}(s) := H\bigl(h^{-1}(s)\bigr)$. System \eqref{eq:theta_ODE} then transforms into the autonomous form
\begin{equation}\label{eq:nondim2}
\begin{split}
\theta ' &= \left(\lambda - f_u\left(u^*(h^{-1}(s)\right), \tilde{H}(s))\right)\cos^2{\theta} - \sin^2{\theta}, \\
s' &= 1 - s^2.
\end{split}
\end{equation}

The equilibria occur at $s = \pm 1$, corresponding to $z \to \pm\infty$.  
In these limits, $H(z)\to 0$ and $u^*(z)\to 0$, so $f_u(z,H(z)) \to -\beta^2$, 
and the angular equation reduces to
\[
\theta' = (\lambda -(-\beta^2))\cos^2\theta - \sin^2\theta
= (\lambda+\beta^2)\cos^2\theta - \sin^2\theta.
\]
To find fixed points, set $\theta' = 0$, which gives $(\lambda+\beta^2)\cos^2\theta = \sin^2\theta$, 
so $\tan^2\theta = \lambda+\beta^2$.

An eigenvalue occurs when there is a heteroclinic connection in \eqref{eq:nondim2} from a fixed point in $s=-1$ that is attracting (hence corresponds to a subspace of unstable solutions for the system as $z\to-\infty$) and a repelling fixed point in $s=+1$. Note that the attraction and repulsion are for the systems inside $s=\pm 1$, respectively. They have $1$-dimensional stable and unstable manifolds, respectively,  in the full phase space.

When there is an eigenvalue, i.e., a heteroclinic connection exists, the total accumulated angular rotation of the heteroclinic trajectory is a multiple of $\pi$. That multiple of $\pi$, we denote $\theta_d(\lambda)$, and it  corresponds to the eigenvalue index.

We examine two representative parameter sets:

\begin{enumerate}
\item For $\beta = 0.15$, $L = 25$, $\alpha = \beta/2$, and $\max(u^*_1) \approx 0.0657$ (red curve in Figure~\ref{fig:pulses}), we detect eigenvalues at $\lambda \approx -6.8\times 10^{-3}$ and $\lambda \approx 0.026$ above the essential spectrum threshold $-\beta^2$ (blue dashed curve in Figure~\ref{fig:three_panels}, left panel). These eigenvalues indicate that the corresponding standing pulse is unstable, thereby verifying (H3).

\item For the same parameters, $\max(u^*_2) \approx 0.5588$ (blue curve in Figure~\ref{fig:pulses}), the eigenvalue curve (red in Figure~\ref{fig:three_panels}, left panel) remains along the horizontal axis, entirely to the left of $-\beta^2$, confirming spectral stability and (H4).
\end{enumerate}

These results identify the unstable (edge state) and stable (base state) pulses that underlie rate-induced tipping phenomena in \eqref{eq:main}. 

\begin{figure}[t]
    \centering
    \includegraphics[width=0.49\textwidth]{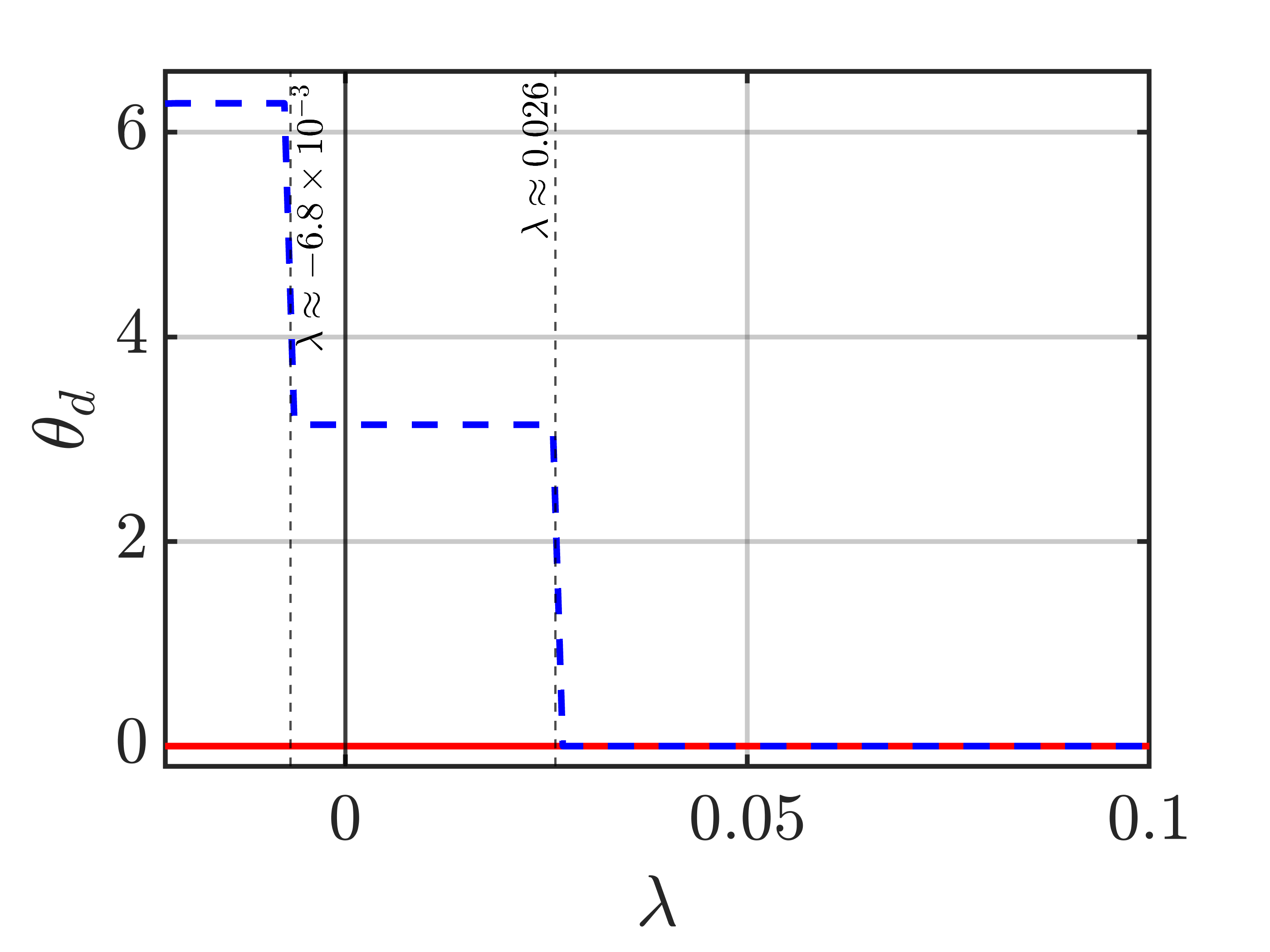}
    \hfill
    \includegraphics[width=0.49\textwidth]{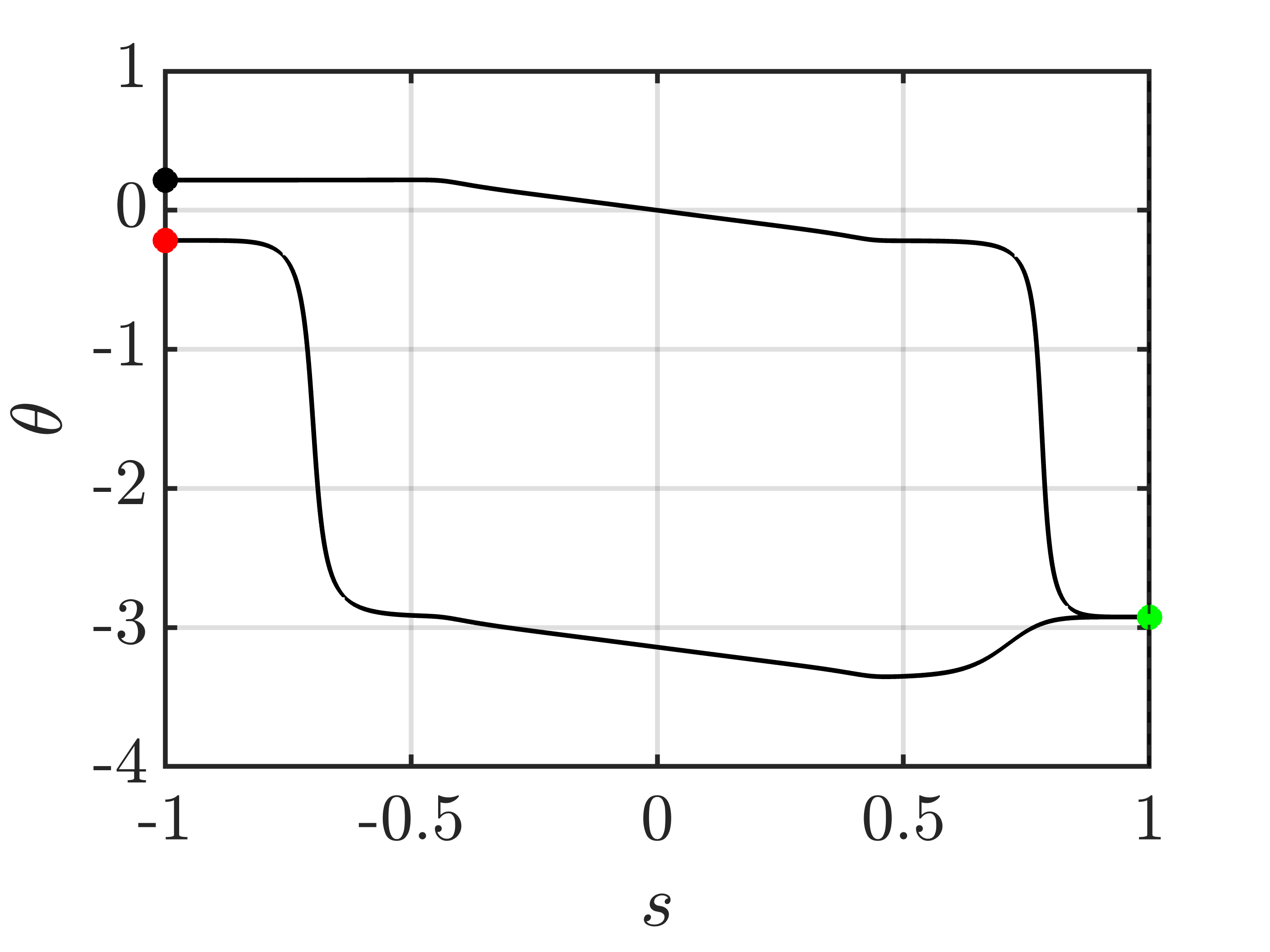}
    \caption{Stability analysis of pulse solutions. 
    Left: Spectral diagnostic via $\theta$-difference, with red solid line representing the stable pulse and blue dashed line representing the unstable pulse; vertical lines mark detected eigenvalues. 
    Right: Phase portrait showing intersecting stable and unstable manifolds connecting fixed points in the $(s, \theta)$ phase plane for $\lambda \approx 0.026$. Both panels use parameters $\beta = 0.15$ and $L = 25$ in \eqref{eq:nondim2}.
    }
    \label{fig:three_panels}
\end{figure}

\begin{figure}[htbp] 
 \begin{center}
$
\begin{array}{lcr}
(a_1) \: \includegraphics[scale=0.4]{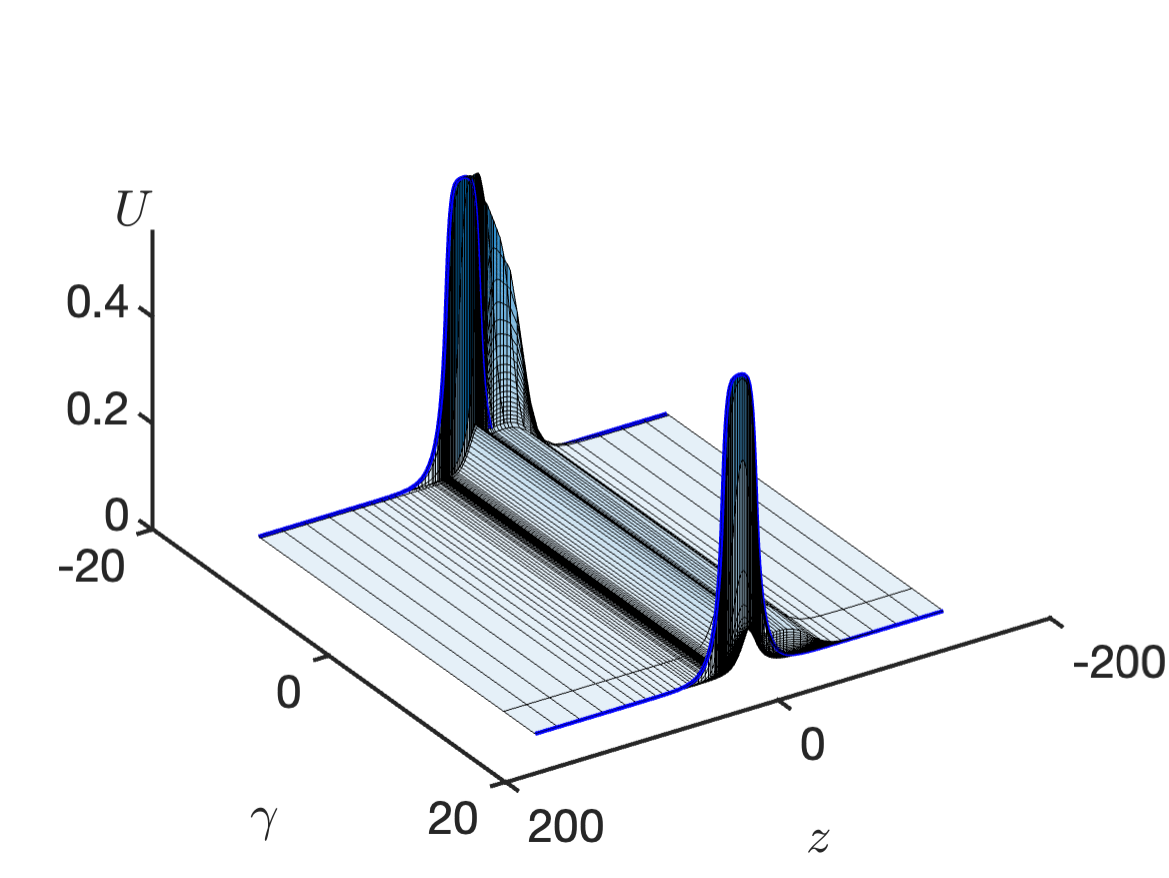}& (b_1) \; \includegraphics[scale=0.4]{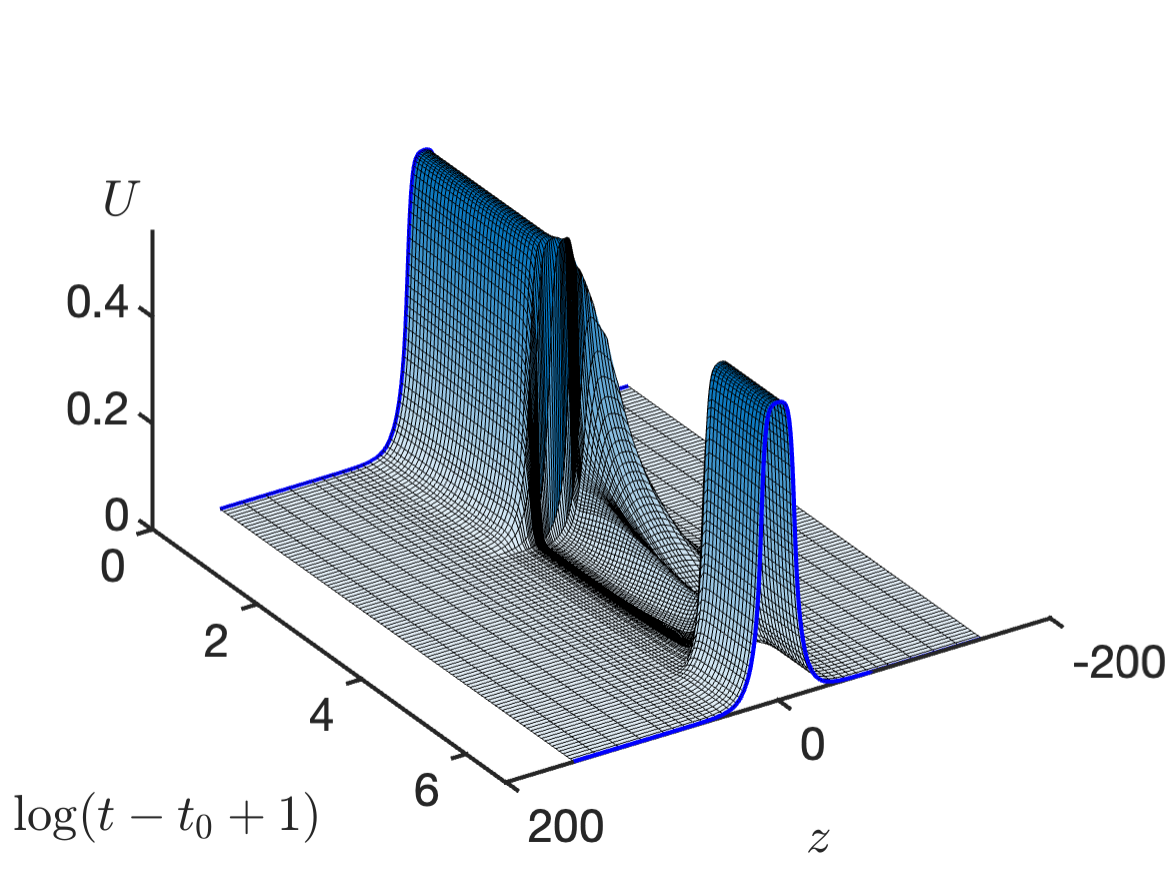} \\(a_2) \: \includegraphics[scale=0.4]{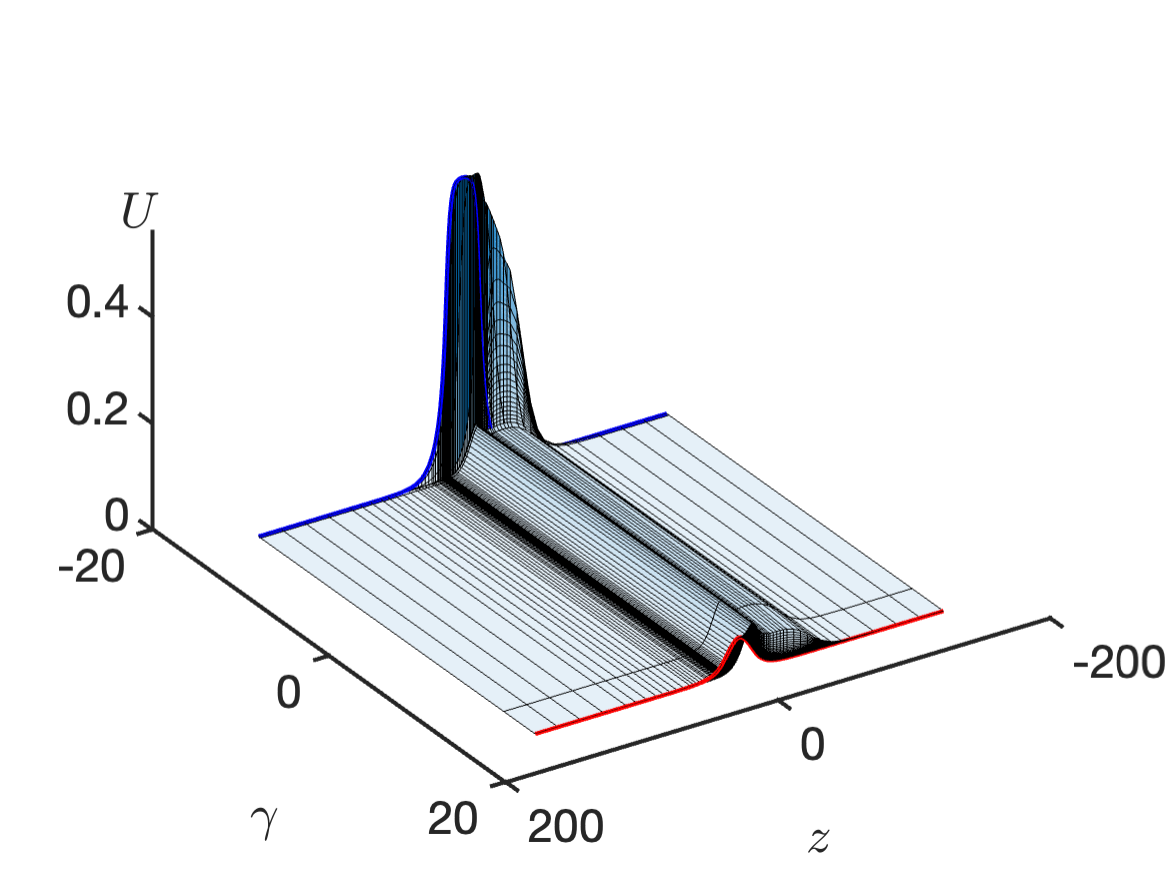} & (b_2) \; \includegraphics[scale=0.4]{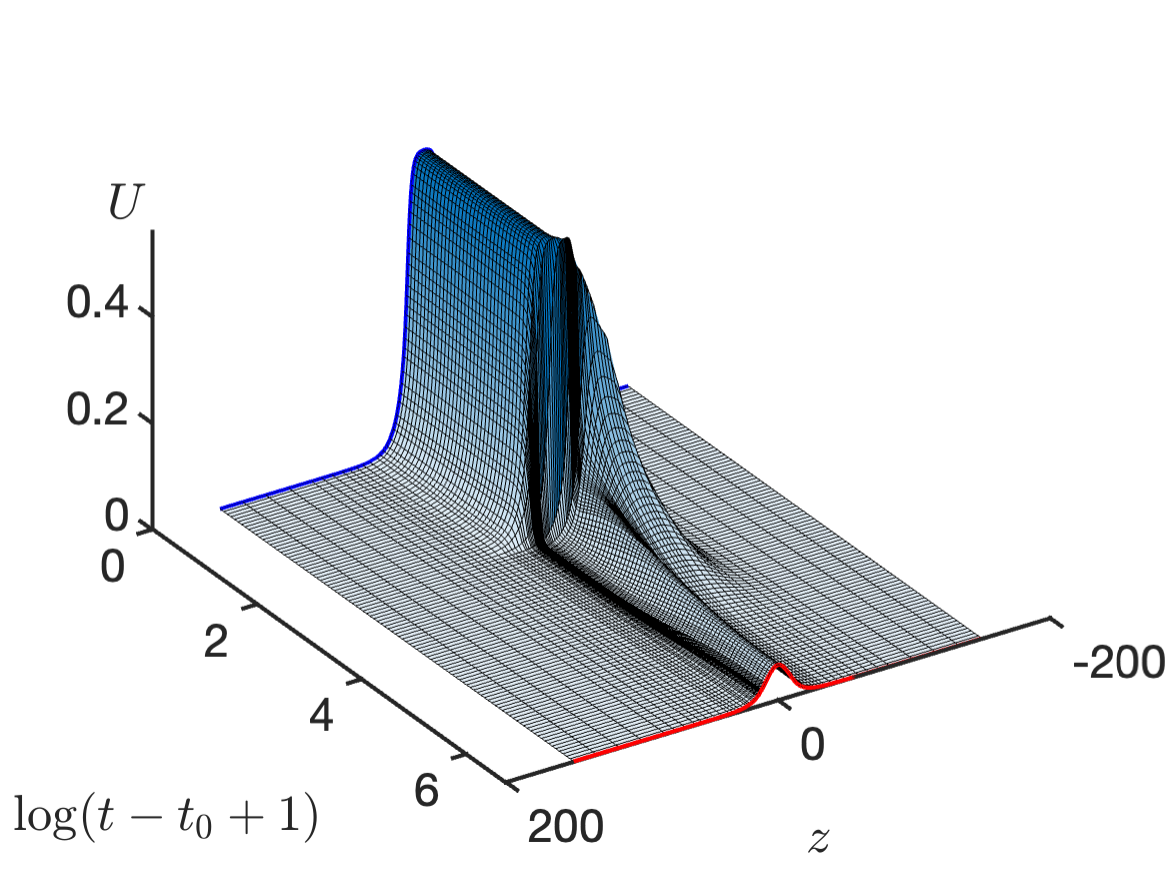} \\
(a_3) \: \includegraphics[scale=0.4]{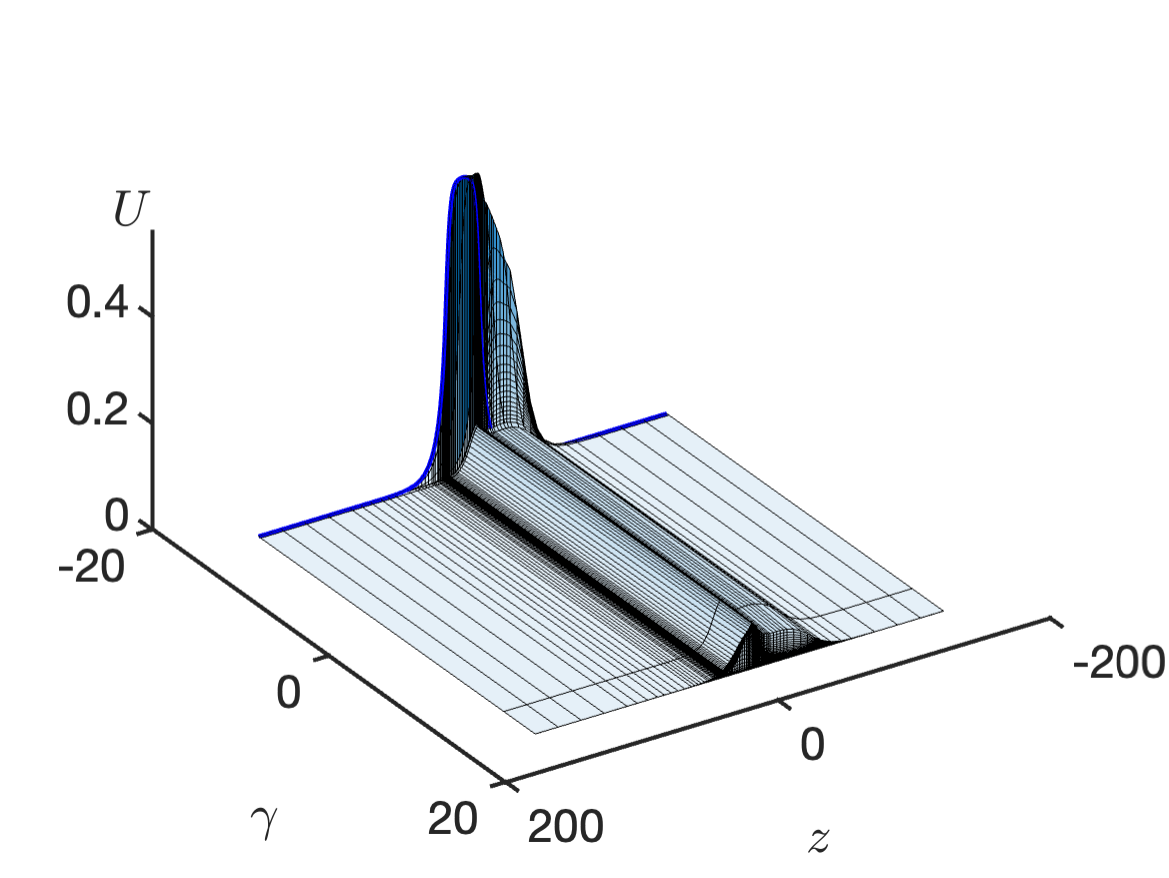}& (b_3) \; \includegraphics[scale=0.4]{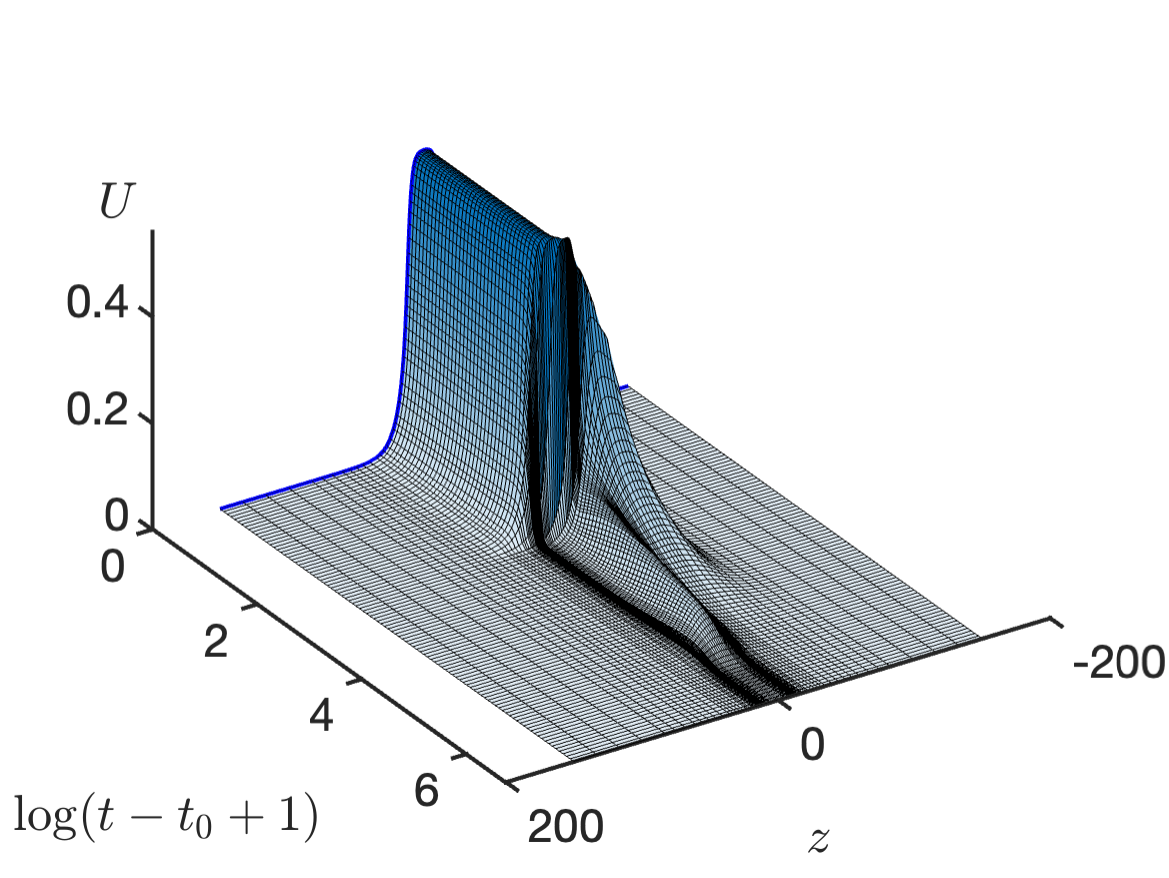} \\
\end{array}
$
\end{center}
\caption{\text
(Left column) Plot, computing the pullback attractor with the shooting approach, of $U$ against $z$ and $\gamma$ and (right column) plot of $U$ against  $z$ and $\log(t-t_0+1)$ for ($a_1$, $b_1$) $r=r_c-0.01$, ($a_2$, $b_2$) $r = r_c$, ($a_3$, $b_3$) $r= r_c+0.01$ where $r_c\approx 0.9670$ is the critical value at which the system undergoes R-tipping. Other parameters are $a = 15.65$, $\beta = 0.15$, $\lambda = 4\beta$, and $L = 25$. 
}
\label{fig:hetero1}
\end{figure}

\subsection{Computation of the pullback attractor\label{subsec:pullback}}

The numerical computation of the pullback attractor $\mathcal{U}$, which depends on the rate $r$, amounts to starting on the unstable manifold $W^u(u_2^*,-a)$, and iterating forward in time. This manifold lives in the phase space of \eqref{main_mov}. To accomplish this, we truncate the time domain of the PDE to a finite interval $[T_L,T_R]$ where $T_L\ll 0\ll T_R$, and we truncate the spatial domain to $[-Z,Z]$, where $Z$ is the truncation value used for solving for the pulses. We then use the method of lines to propagate the PDE solution forward from time $t = T_L$. For this finite time PDE problem, the initial data is given by approximating the unstable manifold $W^u(u_2^*,-a)$ to first order. 

 The method of lines setup is as follows: On the finite time and space domain, we discretize the PDE
\begin{equation}\label{main_mov_copy}
\begin{split}
   u_t=&u_{zz}+rg(\gamma)u_z+f\left(u,H(z)\right),\\
   \gamma_t=&rg(\gamma).
\end{split}
\end{equation}

We choose our spatial grid $\{z_j\}$ to be the set of collocation points used in BVP5c to solve for the stable pulse, as described in Section \ref{comppulse}. 
Let $V_j(t)$ approximate $u(z_j,t)$ where $u$ solves \eqref{main_mov_copy} and $z_j$ is the jth spatial node. Specifically, require that $V = (V_1,...,V_n)$ satisfy the ODE, where we are now using the specific shift function $\gamma(t) =a \tanh rt$ and therefore $g(\gamma) = a-\gamma^2/a$,
\eq{
V_t&= D_{zz} V+r(a-\gamma^2/a)D_zV+f(V_j,H(z_j)),\\
\gamma_t&= r(a-\gamma^2/a).
}{\label{eq:compactified_ODE}}
The differentiation matrices $D_z$ and $D_{zz}$ are the derivative matrices for local interpolation using polynomials of degree 5. Thus, these matrices are band matrices with width 5. For example, if $p_j$ is the polynomial that interpolates the points 
\eq{\{(z_{j-2},V_{j-2}),(z_{j-1},V_{j-1}),(z_j,V_j),(z_{j+1},V_{j+1}), (z_{j+2},V_{j+2})\},}{\notag} then the $j$th row of $D_z$ multiplied by $(V_1,...,V_n)^T$ gives the first derivative of the interpolating polynomial $p_j$ evaluated at $z_j$. The polynomial interpolant uses the five closest grid points for interpolation, which means that for the two grid points closest to a boundary, the interpolation points are not centered. 

The initial value of the ODE system \eqref{eq:compactified_ODE} must correspond to the unstable manifold $W^u(u_2^*,-a)$ of the PDE \eqref{main_mov_copy} posed on the whole line. To accomplish this, we initialize \eqref{eq:compactified_ODE} on the unstable manifold of the fixed point of \eqref{eq:compactified_ODE} that approximates $(u,\gamma) = (u_2^*,-a)$. Specifically, we initialize with a first order approximation of the unstable manifold (ODE) using an appropriate eigenvector of the Jacobian.  To that end, we note that the Jacobian of \eqref{eq:compactified_ODE} is given by
\eq{
J(V,\gamma) &= \mat{D_{zz}+r(a- \gamma^2/a)D_z+Df(V,H(z))
& -(2r\gamma/a)D_z V \\
0& -2r\gamma/a}
}{\label{eq:compactified_jacobian}}
where
\eq{
Df(V,H(z)):= \mat{f_u(V_1,H(z_1))&0&0&\hdots\\
0&f_u(V_2,H(z_2))&0&\hdots\\
\vdots&0&\ddots&0\\
0&\hdots&0&f_u(V_n,H(z_n)) }.
}{\notag}

We evaluate the Jacobian at the fixed point $(V_s,-a)$ of \eqref{eq:compactified_ODE} that corresponds to the stable pulse at location $z = -a$, and then we find the eigenvector $X$ corresponding to the eigenvalue with positive real part. The ODE system is then initialized at $(V_s,-a)^T+\eps X$ with the choice $\eps = 1e-8$. That is, the ODE system is initialized with the first order approximation of the unstable manifold of the fixed point that corresponds to the stable pulse centered at $z = -a$. Thus, the method of lines is initialized with a first order approximation of the unstable manifold (ODE) of the fixed point approximating the stable pulse, which in turn provides an approximation of the unstable manifold of the PDE \eqref{main_mov_copy}.

We solve the ODE system (\ref{eq:compactified_ODE}) on the time interval $[-50,800]$ using the Matlab routine ODE15s. The numerical approximation of the pullback attractor is depicted in Figure \ref{fig:hetero1} for several parameter values depicting scenarios of survival, extinction and the critical rate case.  Note that these are depicted in the moving frame; in the original frame, a shift in $x$ as $\gamma$ increases would be seen.  Further note that we plot time on a log scale in Figures \ref{fig:hetero1} ($b_1$)-($b_3$) in order to see with better detail the transition behavior of the pullback attractor between its time-asymptotic states. Figure \ref{fig:hetero1}($b_2$) displays the pullback attractor corresponding to the critical rate $r=r_c$, the rate for which the pullback attractor forms a heteroclinic connection between the base state at the past asymptotic location and the edge state at the future asymptotic location of the habitat.

\subsection{Critical rate and heteroclinic connection}\label{sec:transverse}

To demonstrate that there is a unique critical rate that separates end-point tracking from extinction, we compute the pullback attractor corresponding to a representative set of rates, $r$. In order to bring out the dependence on $r$, we will use the notation $\mathcal{U}(r)=\{U(t,r)| -\infty<t<+\infty\}$. We determine if, for a given rate $r$, the pullback attractor corresponds to end-point tracking or extinction. This is accomplished by approximating the large time limit of the pullback attractor and then taking the norm of the difference of this with the stable pulse $u_2^*$, as well as with the trivial solution, $u_0^*$. 

Specifically, we compute the pullback attractor for a uniform grid of $N$ rates spanning the interval $[r_{\min}, r_{\max}]$, where the bounds are chosen based on preliminary simulations to ensure they bracket the critical rate. For each rate $r_i$, we evaluate the solution at $t = 1000$, which we have verified is sufficiently large for the pullback attractor to have approached its limiting behavior. We then compute two key metrics: the $L^2$ norm $\|U(1000)\|_2$, which measures the total population density, and the distance to the stable pulse $\|U(1000)-u^*_{2}\|_2$, which quantifies deviation from the tracking state.

The classification into end-point tracking versus extinction regimes is based on threshold criteria. If $\|U(1000)-u^*_{2}\|_2 < \delta_1$ for some small tolerance $\delta_1$, we classify the behavior as end-point tracking. Conversely, if $\|U(1000)\|_2 < \delta_2$ for some small tolerance $\delta_2$, we classify the behavior as extinction. In practice, we observe a clear separation between these two regimes with no ambiguous intermediate cases, provided the simulation time is sufficiently long.

We see in Figure \ref{fig:habitatfig118} a single critical rate $r$, corresponding to a jump in the colors, that separates end-point tracking behavior from extinction. The black curve, representing $\|U(1000)\|_2$, exhibits a transition from values near $\|u^*_2\|_2$ to values near zero, while the blue curve, representing $\|U(1000)-u^*_2\|_2$, shows the complementary transition from near-zero values to values comparable to $\|u^*_2\|_2$. The abruptness of this transition and the absence of multiple such transitions support our conjecture regarding the uniqueness of the critical rate. This numerical evidence strongly suggests that the system undergoes tipping at a single, well-defined critical rate, rather than exhibiting multiple critical rates.

\begin{figure}[htbp]
 \begin{center}
$
\begin{array}{lcr}
\includegraphics[scale=0.5]{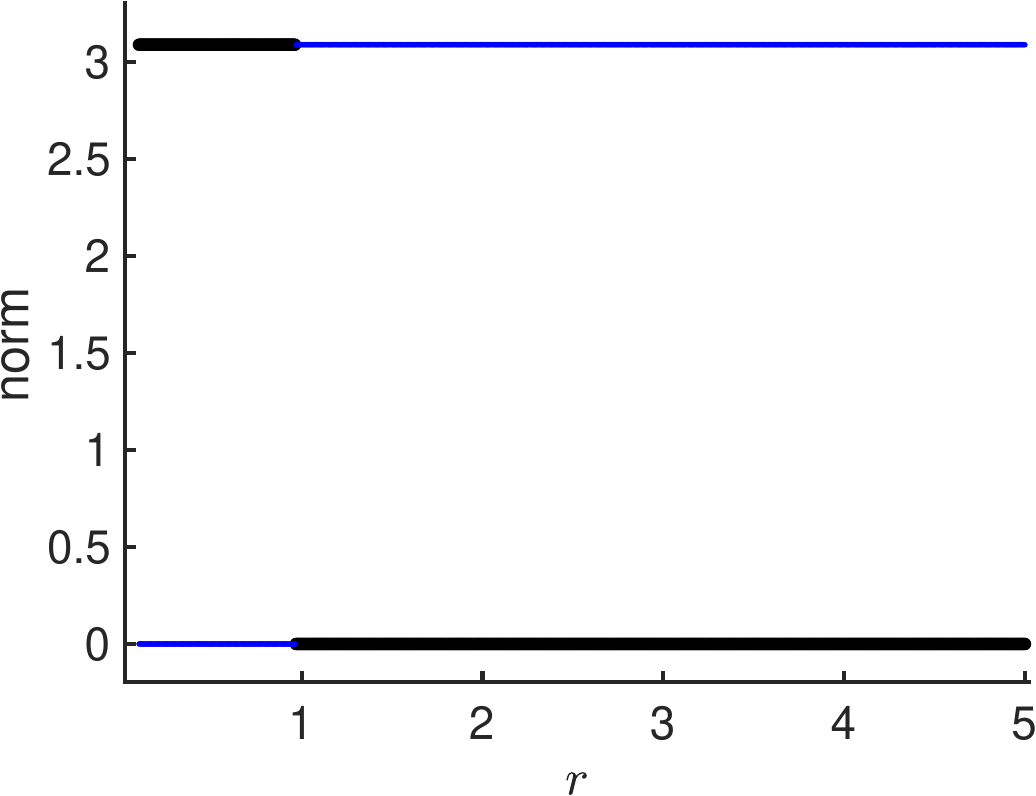}
\end{array}
$
\end{center}
\caption{With $(U(t,r),\gamma (t))$ being the solution to \eqref{main_mov_copy}, the black curve plots an approximation of $\|U(1000)\|_2$ and the blue curve of  $\|U(1000)-u^*_{2}\|_2$, where $u^*_{2}$ is the stable pulse evaluated at the spatial nodes. 
}
\label{fig:habitatfig118}
\end{figure}

At the critical rate, the pullback attractor forms a heteroclinic connection between the base state at the past asymptotic location and the edge state at the future asymptotic location of the habitat.  We compute this heteroclinic connection as a boundary/interior value  problem (BIVP). Knowing an approximate value of the critical rate is a key starting point since that is the parameter value at which the heteroclinic orbit exists. Finding the heteroclinic connection numerically is an important mathematical challenge, given the inifinite-dimensionality of the phase space. But it also has physical significance as it affords a more precise determination of the critical rate.

 We first approximate the critical rate using a bisection method. That is, we verify that the pullback attractor, computed as described in Section \ref{subsec:pullback}, for a rate $r_1$ corresponds to tracking and for a rate $r_2$  corresponds to extinction. We then compute $r_3:= (r_1+r_2)/2$ and determine whether the pullback attractor for this rate corresponds to tracking or extinction. We then replace $r_1$ or $r_2$ with $r_3$ so that again we have two rates, one corresponding to tracking and one to extinction. The pattern continues until the two rates, $r_1$ and $r_2$, are very close together. 

  We then take $r_1$ as a starting approximation of the critical rate, and the corresponding pullback attractor to approximate the heteroclinic connection. To approximate the critical rate $r_c$ with greater precision, and the associated heteroclinic connection, we form a boundary value type problem, which will also include a condition on the solution in the interior of the domain interval. Specifically, we  solve \eqref{eq:compactified_ODE} as a BIVP with conditions that correspond to the solution lying on the unstable manifold of the fixed point $(V_1^s,...,V_N^s,-a)$ at $t = -\infty$, and lying on the stable manifold of the fixed point $(V_1^u,...,V_N^u,a)$, corresponding to the unstable pulse, at $t = +\infty$.

The boundary and interior conditions are given as follows. Let $P_{-\infty}^s$ be the projection onto the stable subspace of $J(V_1^s,...,V_N^s,-a)$ and let $P_{+\infty}^u$ be the projection onto the unstable subspace of $J(V_1^u,...,V_N^u,a)$. For details on how to compute these subspaces numerically, see \cite{TrefethenBau1997}. Then the boundary conditions and interior condition are
\eq{
0=P_{+\infty}^u\left(\begin{bmatrix} V_1(+\infty)\\\vdots\\V_N(+\infty)\\\gamma(+\infty) \end{bmatrix}-\begin{bmatrix} V_1^u\\\vdots\\V_N^u\\a\end{bmatrix}\right),\quad
0=P_{-\infty}^s\left(\begin{bmatrix} V_1(-\infty)\\\vdots\\V_N(-\infty)\\\gamma(-\infty) \end{bmatrix}-\begin{bmatrix} V_1^s\\\vdots\\V_N^s\\-a\end{bmatrix}\right),\quad
0&= \gamma(0).
}{\notag}
We solve this BIVP on the same interval as in the bisection method, and use the solution from the bisection method as an initial guess. A plot of the heteroclinic as solved for by this method is shown in Figure \ref{fig:heteroclinic}. The precise critical rate, found by this method, gives us a precise picture of when R-tipping occurs in this canonical example. 
This critical rate is plotted as a function of $d(=2a)$ versus $r$ in Figure~\ref{fig:criticalvalueplot}.

As discussed in Section \ref{main_thm}, the heteroclinic connection is an intersection of $W^u(u_2^*,-a)$, which is the lift of the pullback attractor $\mathcal{U}$ to the autonomous system \eqref{main_mov}, with $W^s(u_1^*,+a)$, at $r=r_c$, and it is natural to ask whether this intersection is transverse. It is also of interest to see that the manifold crosses in the expected direction, which corresponds to survival to $r<r_c$ and extinction when $r>r_c$. The transversality shows that this division is precise, and the extinction/survival distinction holds arbitrarily close to $r_c$. 

To show transversality, we begin by augmenting system \eqref{main_mov} with the parameter $r$, 
\begin{equation}\label{main_mov_aug}
\begin{split}
u_t=& u_{zz}+r\,g(\gamma)\,u_z+f\left(u,H(z)\right),\\
   \gamma_t=& r\,g(\gamma),\\r_t=&0.
\end{split}
\end{equation}
The linearization of \eqref{main_mov_aug} about the heteroclinic solution $(\bar u, \gamma_c,r_c)$ is
\eq{
u_t&= u_{zz}+r(a-\gamma_c^2/a)\bar{u}_z -2r_c\gamma_c\ \bar{u}_z\gamma/a+r_c(a-\gamma_c^2/a)u_z+\frac{df}{du}(\bar u,H(z))u\\
\gamma_t &= -2r_c\gamma_c \gamma/a+(a-\gamma_c^2/a)r\\
r_t&= 0.
}{\label{eq:jac_expanded}}

\begin{figure}[t] 
 \begin{center}
$
\begin{array}{lcr}
 \includegraphics[scale=0.4]{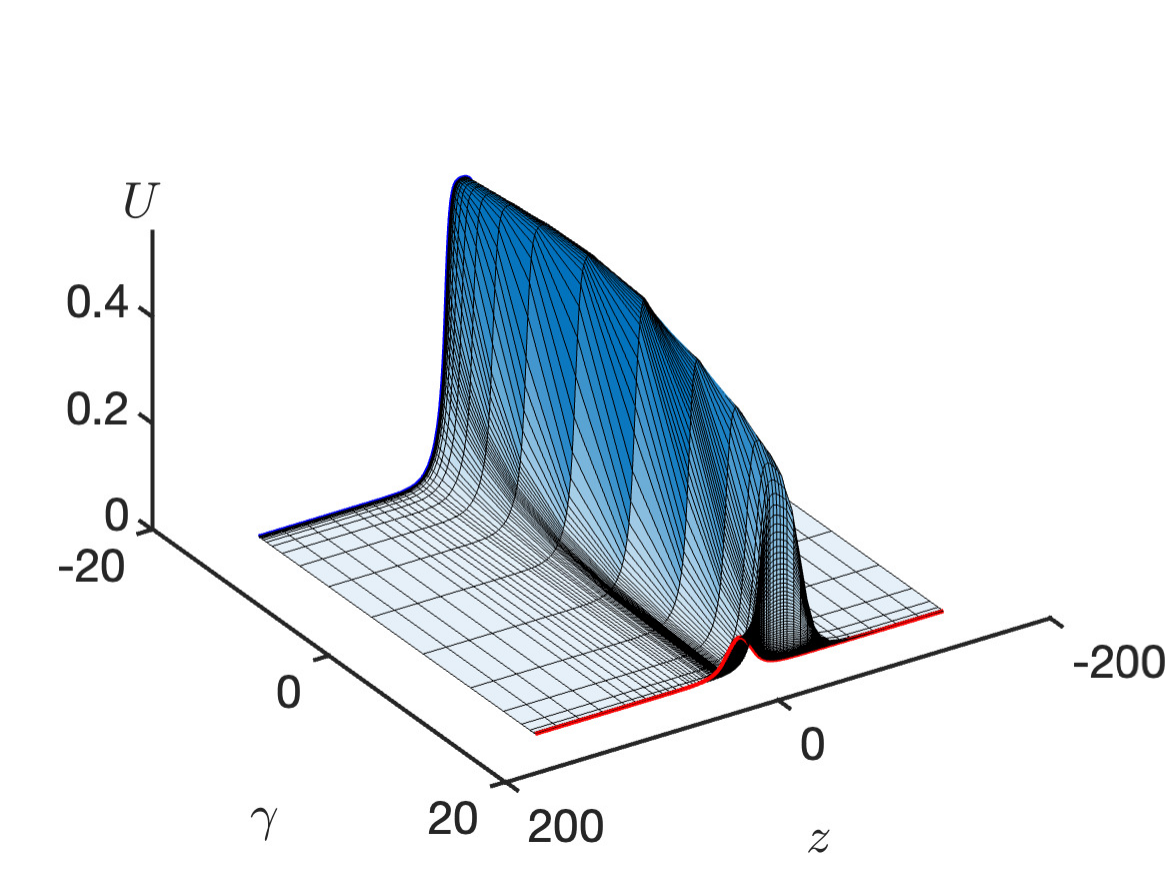}& \includegraphics[scale=0.4]{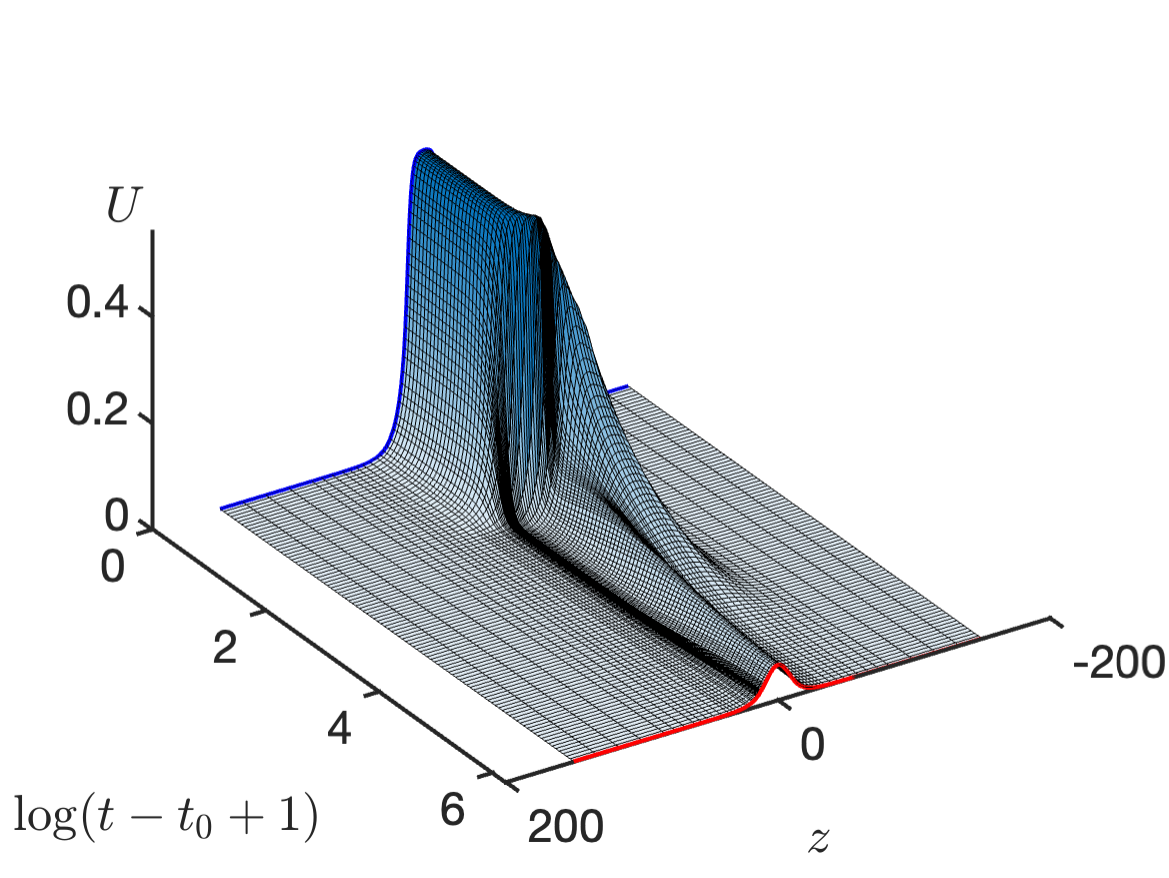} 
\end{array}
$
\end{center}
\caption{\text
(Left column) Plot, computing the heteroclinic connection with the boundary/interior value approach (BIVP), of $U$ against $z$ and $\gamma$ and (right column) plot of $U$ against  $z$ and $\log(t-t_0+1)$ for $r = r_c$ where $r_c\approx 0.9670$ is the critical value at which the system undergoes R-tipping. Other parameters are $a = 15.65$, $\beta = 0.15$, $\lambda = 4\beta$, and $L = 25$. 
}
\label{fig:heteroclinic}
\end{figure}

Using the linearized system, we calculate the tangent vector to 
\begin{equation}\label{eq:wculoc}\{\left(W^u(u_2^*,-a,r),r \right): r_c-\delta <r<r_c + \delta, 0<\delta \ll 1 \},\end{equation} 
which is normal to $W^u(u_2^*,-a,r_c)$. We then take the inner product of this tangent vector with a normal vector to the stable manifold $W^s(u_1^*,a,r)$ and show it is non-zero. 
Note that the manifold given by \eqref{eq:wculoc} can be viewed as the center-unstable manifold of the point $\left(u_2^*,-a,r_c\right)$, which shows it is a manifold, see \cite{batesjones}.
 
Turning \eqref{eq:jac_expanded} into an ODE system, and discretizing, we have
\eq{
V_t&= D_{zz}V+r(a-\gamma_c^2/a)D_z\bar V-2r_c\gamma_c \gamma D_z\bar V/a + r_c(a-\gamma_c^2/a)D_zV+\left[ \frac{df}{du} (\bar V_j,H(z_j))V_j\right]\\
\gamma_t&= -2r_c\gamma_c \gamma/a+(a-\gamma_c^2/a)r\\
r_t& = 0
}{\label{eq:expandedODE}}
The Jacobian of the RHS of \eqref{eq:expandedODE} is given by
\eq{
J(V,\gamma,r) = \mat{ D_{zz}+r_c(a-\gamma_c^2/a)D_z+\textrm{diag}\left(\left[ \frac{df}{du} (\bar V_j,H(z_j))\right]\right) & -\frac{2r_c\gamma_c }{a}D_z\bar V & (a-\gamma_c^2/a)D_z\bar V\\
0&-2r_c \gamma_c/a&a-\gamma_c^2/a\\
0&0&0}
}{\notag}
We note that at $t=+\infty$, $a-\gamma_c^2/a=0$. We solve the linearized system initialized with $(V,\gamma,r)|_{t=-T} = (0,0,1)^T$ and evaluate at $t=+T$ to obtain $(V(T),\gamma(T),r(T))$. We then normalize this vector to obtain a unit tangent vector  of the unstable manifold $W^u(u_2^*,-a,r)$ as $r$ varies. 

We use this setup to show that the unstable manifold $W^u(u_2^*,-a,r)$ intersects the stable manifold $W^s(u_1^*,a,r_c)$, as $r$ varies, transversally at $r=r_c$. The vector tangent to the unstable manifold $W^u(u_1^*,a,r_c)$ is normal to the stable manifold $W^s(u_1,a,r_c)$, since the eigenfunctions of the linearization are orthogonal due its self-adjointness. We approximate this normal vector by computing the eigenvector of $J(u_1^*,a,r_c)$ that corresponds to the growth eigenvalue. We ensure that the eigenvector corresponds to the direction of the flow from $u_1^*$ to $u_2^*$ by making the largest component of the vector positive through negation of the eigenvector if necessary. We also normalize this vector.

The inner product of the unit tangent vector to $W^{cu}(u_2^*,-a,r)$, normal to $W^{u}(u_2^*,-a,r_c)$ and the unit tangent vector to $W^u(u_1^*,a,r)$ at $r=r_c$ is non-zero, which confirms the transversallity of the intersection of $W^u(u_2^*,-a,r)$, as $r$ varies, with $W^s(u_1^*,a,r)$ at $r = r_c$.
 By the implicit function theorem, this establishes $r_c$ as a sharp threshold: the heteroclinic connection exists only at $r=r_c$, separating extinction ($r>r_c$) from survival ($r<r_c$). It is also seen to have the appropriate sign, i.e., consistent with the switch from survival to extinction as $r$ increases. Following the values taken above, when $L=25$ and $r=1$ ($a\approx 15.65$), the inner product is computed to be -0.0037. This shows that the tangent vector to the family (as $r$ varies) of the pullback attractors  $W^u(u_2^*,-a,r)$ points in the opposite direction to the normal vector, $W^u(u_1^*,a,r)$, which points in the direction of the flow that moves from near $u_1^*$ to $u_2^*$.

\section{Conclusion and discussions}\label{sec:conclusion}

We analyzed rate-induced tipping (R-tipping) phenomena in a reaction-diffusion model representing species dynamics under non-uniformly shifting habitat conditions. Our investigation has revealed several key insights into the dynamics of populations facing rapid environmental changes. More specifically, we have demonstrated numerically the existence of a critical rate $r_c(d)$ for a sufficiently large displacement $d>d^*$, at which the system undergoes a sudden transition. This finding underscores the importance of considering not just the magnitude of environmental changes, but also the speed at which they occur. The identification of this critical threshold provides a quantitative framework for assessing the vulnerability of populations to rapid habitat shifts.

In the limit of fast habitat shifts, we proved that R-tipping occurs for sufficiently large displacements. This result highlights the potential for abrupt population collapses when environmental changes outpace the species' ability to adapt or migrate. It serves as a cautionary note for conservation efforts, emphasizing the need for proactive measures in the face of accelerating environmental changes. Conversely, our analysis of the slow shift limit established that solutions can track the moving favorable habitat zone without tipping. This finding offers hope for species persistence under gradual environmental changes and suggests that slowing the rate of habitat alteration could be a vital strategy for conservation.

Our numerical simulations have revealed the  intricate structure of the critical transition and its dependence on system parameters. This insight provides a foundation for predicting how different species, characterized by various life history traits and environmental sensitivities, might respond to habitat shifts. The analytical techniques developed in this study, particularly our treatment of the fast and slow limits, extends the theoretical framework of dynamical systems, particularly in the context of true non-autonomous instabilities and rate-induced phenomena. These methods may find applications in analyzing rate-induced phenomena in other domains beyond ecology.

While our model is theoretical, the qualitative behaviors it predicts align with empirical observations of species responses to habitat changes \cite{pinsky2013marine,burrows2014geographical,schloss2012dispersal}. This concordance suggests that our framework could serve as a valuable tool for interpreting and predicting real-world ecological dynamics. The existence of critical rates and the geometry of R-tipping thresholds have significant implications for environmental policy and conservation strategies. It suggests that efforts to slow the pace of habitat alteration, even if the total change remains the same, could be crucial for preventing sudden biodiversity losses.

These results thus contribute to our understanding of how species might respond to rapid environmental changes, such as those induced by climate change. Some other possibilities for future work are listed below:
\begin{enumerate}
    \item Multi-species interactions: Extend the model to include multiple interacting species, investigating how competition or predation might affect R-tipping phenomena.
    \item Stochastic effects: Incorporate environmental noise or demographic stochasticity to study how random fluctuations influence tipping dynamics. Recent work \cite{slyman2023rate,slyman2025tipping} suggests that stochastic extensions could reveal important interactions between rate-induced and noise-induced tipping phenomena.
    \item Higher spatial dimensions: Extend the analysis to two or three spatial dimensions, which could reveal new patterns and phenomena not present in the one-dimensional case.
    \item Periodic forcing: Analyze the effects of seasonality or other periodic environmental variations on the tipping dynamics.
    \item Early warning signals: Develop and test statistical indicators that could provide early warnings of impending R-tipping in both models and real ecosystems.
\end{enumerate}

As we face unprecedented rates of global change, the ability to anticipate and potentially prevent such critical transitions becomes increasingly vital for maintaining biodiversity and ecosystem function. Our work lays a foundation for future research that could further bridge the gap between mathematical theory and ecological practice, ultimately contributing to more effective strategies for managing and conserving ecosystems in a rapidly changing world.

\section*{Acknowledgement}
B.B., E.F., M.H., and C.J. extend gratitude to the AIM workshop on {\it Computer-Assisted Proofs for Stability Analysis of Nonlinear Waves}, where they forged collaborative connections and began exploring ideas on projects related to traveling waves and related problems. E.F. was partially supported by NSF grant DMS-2137947.  M.H. was partially supported by the NSF grant DMS-2406623. C.J. was supported by the US Office of Naval Research Grant N00014-24-1-2198. 
SW was partially supported by Taighde Éireann—Research Ireland (Grant No. FFP-A/12066).

\bibliographystyle{abbrv}
\bibliography{refs}

@incollection {batesjones,
    AUTHOR = {Bates, Peter W. and Jones, Christopher K. R. T.},
     TITLE = {Invariant manifolds for semilinear partial differential
              equations},
 BOOKTITLE = {Dynamics reported, {V}ol.\ 2},
    SERIES = {Dynam. Report. Ser. Dynam. Systems Appl.},
    VOLUME = {2},
     PAGES = {1--38},
 PUBLISHER = {Wiley, Chichester},
      YEAR = {1989},
      ISBN = {0-471-91958-6},
   MRCLASS = {58D25 (34G20 35Q20 58F10)},
  MRNUMBER = {1000974},
MRREVIEWER = {John\ M.\ Ball},
}

@book{henry1981geometric,
  address = {Berlin},
  author = {Henry, Daniel B.},
  publisher = {Springer},
  series = {Lecture Notes in Mathematics},
  title = {{Geometric Theory of Semilinear Parabolic Equations}},
  volume = {840},
  year = {1981}
}

@book{TrefethenBau1997,
  author = {Trefethen, Lloyd N. and Bau, David},
  title = {Numerical Linear Algebra},
  publisher = {SIAM},
  year = {1997},
  isbn = {0898713617}
}

@article{berestycki2009can,
  title={Can a species keep pace with a shifting climate?},
  author={Berestycki, Henri and Diekmann, Odo and Nagelkerke, Cornelis J and Zegeling, Paul A},
  journal={Bulletin of mathematical biology},
  volume={71},
  pages={399--429},
  year={2009},
  publisher={Springer}
}

@book{pazy1983semigroups,
  title={Semigroups of Linear Operators and Applications to Partial Differential Equations},
  author={Pazy, Amnon},
  year={1983},
  publisher={Springer-Verlag},
  address={New York},
  series={Applied Mathematical Sciences},
  volume={44},
  isbn={978-0-387-90845-8}
}

@book{kloeden2011nonautonomous,
  title={Nonautonomous dynamical systems},
  author={Kloeden, Peter E and Rasmussen, Martin},
  number={176},
  year={2011},
  publisher={American Mathematical Soc.}
}

@article{ashwin2017parameter,
  title={Parameter shifts for nonautonomous systems in low dimension: bifurcation-and rate-induced tipping},
  author={Ashwin, Peter and Perryman, Clare and Wieczorek, Sebastian},
  journal={Nonlinearity},
  volume={30},
  number={6},
  pages={2185},
  year={2017},
  publisher={IOP Publishing}
}

@article{wieczorek2021,
  title={Compactification for asymptotically autonomous dynamical systems: theory, applications and invariant manifolds},
  author={Wieczorek, Sebastian and Xie, Chun and Jones, Chris KRT},
  journal={Nonlinearity},
  volume={34},
  number={5},
  pages={2970},
  year={2021},
  publisher={IOP Publishing}
}

@article{wieczorek2023rate,
  title={Rate-induced tipping: Thresholds, edge states and connecting orbits},
  author={Wieczorek, Sebastian and Xie, Chun and Ashwin, Peter},
  journal={Nonlinearity},
  volume={36},
  number={6},
  pages={3238},
  year={2023},
  publisher={IOP Publishing}
}

@article{hasan2023rate,
  title={Rate-induced tipping in heterogeneous reaction-diffusion systems: An invariant manifold framework and geographically shifting ecosystems},
  author={Hasan, Cris R and C{\'a}rthaigh, Ruaidhr{\'\i} Mac and Wieczorek, Sebastian},
  journal={SIAM Journal on Applied Dynamical Systems},
  volume={22},
  number={4},
  pages={2991--3024},
  year={2023},
  publisher={SIAM}
}

@article{ritchie2023rate,
  title={Rate-induced tipping in natural and human systems},
  author={Ritchie, Paul DL and Alkhayuon, Hassan and Cox, Peter M and Wieczorek, Sebastian},
  journal={Earth System Dynamics},
  volume={14},
  number={3},
  pages={669--683},
  year={2023},
  publisher={Copernicus GmbH}
}

@article{chen2011rapid,
  title={Rapid range shifts of species associated with high levels of climate warming},
  author={Chen, I-Ching and Hill, Jane K and Ohlem{\"u}ller, Ralf and Roy, David B and Thomas, Chris D},
  journal={Science},
  volume={333},
  number={6045},
  pages={1024--1026},
  year={2011},
  publisher={American Association for the Advancement of Science}
}

@article{parmesan2003globally,
  title={A globally coherent fingerprint of climate change impacts across natural systems},
  author={Parmesan, Camille and Yohe, Gary},
  journal={nature},
  volume={421},
  number={6918},
  pages={37--42},
  year={2003},
  publisher={Nature Publishing Group UK London}
}

@article{diffenbaugh2013changes,
  title={Changes in ecologically critical terrestrial climate conditions},
  author={Diffenbaugh, Noah S and Field, Christopher B},
  journal={Science},
  volume={341},
  number={6145},
  pages={486--492},
  year={2013},
  publisher={American Association for the Advancement of Science}
}

@article{loarie2009velocity,
  title={The velocity of climate change},
  author={Loarie, Scott R and Duffy, Philip B and Hamilton, Healy and Asner, Gregory P and Field, Christopher B and Ackerly, David D},
  journal={Nature},
  volume={462},
  number={7276},
  pages={1052--1055},
  year={2009},
  publisher={Nature Publishing Group UK London}
}

@article{pinsky2013marine,
  title={Marine taxa track local climate velocities},
  author={Pinsky, Malin L and Worm, Boris and Fogarty, Michael J and Sarmiento, Jorge L and Levin, Simon A},
  journal={Science},
  volume={341},
  number={6151},
  pages={1239--1242},
  year={2013},
  publisher={American Association for the Advancement of Science}
}

@article{burrows2014geographical,
  title={Geographical limits to species-range shifts are suggested by climate velocity},
  author={Burrows, Michael T and Schoeman, David S and Richardson, Anthony J and Molinos, Jorge Garc{\'\i}a and Hoffmann, Ary and Buckley, Lauren B and Moore, Pippa J and Brown, Christopher J and Bruno, John F and Duarte, Carlos M and others},
  journal={Nature},
  volume={507},
  number={7493},
  pages={492--495},
  year={2014},
  publisher={Nature Publishing Group UK London}
}

@article{schloss2012dispersal,
  title={Dispersal will limit ability of mammals to track climate change in the Western Hemisphere},
  author={Schloss, Carrie A and Nu{\~n}ez, Tristan A and Lawler, Joshua J},
  journal={Proceedings of the National Academy of Sciences},
  volume={109},
  number={22},
  pages={8606--8611},
  year={2012},
  publisher={National Academy of Sciences}
}

@article{slyman2025tipping,
  title={Tipping mechanisms in a carbon cycle model},
  author={Slyman, Katherine and Fleurantin, Emmanuel and Jones, Christopher KRT},
  journal={Chaos: An Interdisciplinary Journal of Nonlinear Science},
  volume={35},
  number={5},
  year={2025},
  publisher={AIP Publishing}
}

@article{slyman2023rate,
  title={Rate and noise-induced tipping working in concert},
  author={Slyman, Katherine and Jones, Christopher K},
  journal={Chaos: An Interdisciplinary Journal of Nonlinear Science},
  volume={33},
  number={1},
  year={2023},
  publisher={AIP Publishing}
}

\end{document}